\newtheorem{theorem}{Theorem}[section]
\newtheorem{lemma}[theorem]{Lemma}
\newtheorem{corollary}[theorem]{Corollary}
\newtheorem{proposition}[theorem]{Proposition}
\theoremstyle{remark}
\newtheorem{definition}[theorem]{Definition}
\def\impart{\operatorname{Im}}
\newcommand{\Res}{\mathrm{Res}}
\numberwithin{equation}{section}
\begin{document}

\title{The global parametrix in the Riemann-Hilbert steepest descent
    analysis for orthogonal polynomials}
\author{Arno Kuijlaars\footnote{Department of Mathematics, Katholieke Universiteit Leuven,
Celestijnenlaan 200B, B-3001 Leuven, Belgium. email: arno.kuijlaars@wis.kuleuven.be.  
The work of the first author is supported by FWO-Flanders project
G.0427.09, by K.U.~Leuven research grant OT/08/33, by the Belgian
Interuniversity Attraction Pole P06/02, by the European Science
Foundation Program MISGAM, and by grant MTM2008-06689-C02-01 of the
Spanish Ministry of Science and Innovation.} \quad and \quad Man Yue
Mo\footnote{Department of Mathematics, University of Bristol,
Bristol BS8 1TW, U.K. email: m.mo@bristol.ac.uk. The second author
is supported by the EPSRC grant EP/G019843/1.}}

\date{\ }
\maketitle

\begin{abstract}
In the application of the Deift-Zhou steepest descent method to the
Riemann-Hilbert problem for orthogonal polynomials, a model
Riemann-Hilbert problem that appears in the multi-cut case is solved
with the use of hyperelliptic theta functions. We present here an
alternative approach which uses meromorphic differentials instead of
theta functions to construct the solution of the model
Riemann-Hilbert problem. By using this representation, we obtain a
new and elementary proof for the solvability of the model
Riemann-Hilbert problem.

\end{abstract}

\maketitle
\section{The global parametrix}

\subsection{Introduction}
The Deift-Zhou steepest descent method is a powerful technique in
the asymptotic analysis of Riemann-Hilbert problems that has been
successfully applied to numerous problems in integrable systems,
random matrix theory and orthogonal polynomials, see e.g.\
\cite{Deift}, \cite{DIZ}, \cite{DKMVZ}, \cite{DZ1}, \cite{KMM}. When
applying the steepest descent method as in \cite{DKMVZ}, one
performs a series of transformations to the $2\times 2$
matrix-valued Riemann-Hilbert problem for orthogonal polynomials to
eventually approximate it by a ``model Riemann-Hilbert problem''
which is also known as the ``global parametrix'' or the ``outer
parametrix''. The solution of the model Riemann-Hilbert (RH) problem
in \cite{Apt}, \cite{DIZ}, \cite{DKMVZ}, \cite{DIKZ}, \cite[Section
4.3]{KMM} and \cite{KK} uses Riemann theta functions on
hyperelliptic Riemann surfaces. In \cite{Kor04}, such approach was
generalized to higher dimensional Riemann-Hilbert problems with
quasi-permutation jump matrices and the solutions were expressed in
terms of Riemann theta functions together with the Szeg\"o kernel.
These constructions use various notions from Riemann surfaces, which,
although classical, require a fair amount of background in
algebraic geometry.

In this paper, we present an alternative approach to the solution of
the model RH problem. The approach in this paper is less
constructive since it does not lead to explicit formulas. However,
in many applications, such as the universality results of random
matrix theory, one is merely interested in the existence of a global
parametrix, rather than its explicit form. In such cases, the lack
of explicit formulas is not an issue. The present approach also
enables us to obtain a new elementary proof for the solvability of
the model Riemann-Hilbert problem.

On the other hand, our approach is conceptually rather simple (in
our opinion), and generalizes without too much effort from
hyperelliptic (i.e., two-sheeted) Riemann surfaces to multi-sheeted
Riemann surfaces which arise in the steepest descent analysis of
larger size RH problems associated with multiple orthogonal
polynomials \cite{VAGK}.
In \cite{DuKu}, the RH steepest descent analysis of a 4 x 4 matrix-valued
Riemann-Hilbert problem was done with the help of an associated Riemann
surface that is a four-sheeted cover of the Riemann sphere.
 The analysis in \cite{DuKu} was restricted to the
one-cut case (i.e., genus zero). The extension to the multi-cut case
was done in \cite{Mo} where both meromorphic differentials and
Riemann theta functions are used to solve the model RH problem.
Analyzing the approach in \cite{Mo} we found that it is also
possible to avoid the use of Riemann theta functions completely and
to use meromorphic differentials only. For the sake of clarity we
present this approach here for the case of the $2\times 2$
matrix-valued model RH problem as it arises in the steepest descent
analysis for orthogonal polynomials. We will use this construction
in the forthcoming work \cite{DKM} for a $4 \times 4$ matrix-valued
RH problem. See also \cite{BDK} for a similar situation
in a $3 \times 3$ context.

One of the main problems in solving the model Riemann-Hilbert problem
is the proof of its solvability. As pointed out in \cite{DIKZ} and
\cite{KK}, the model Riemann-Hilbert problem can be represented as
the monodromy problem of the Schlesinger equation.
To see whether the monodromy
problem is solvable, one can construct an isomonodromic tau function
\cite{JM}, \cite{Mal}, corresponding to the Schlesinger equation and the
monodromy problem will be solvable if the value of the tau function
is non-zero. However, the determination of the zeroes of the tau
function, known as the Malgrange divisor, is often a difficult task.
In \cite{KK} and \cite{Kor04}, it was shown that for the type of
model Riemann-Hilbert problem considered here, the isomonodromic tau
function is zero if and only if a certain theta function is zero.
Therefore to prove the solvability of the model Riemann-Hilbert
problem, one would need to study the theta divisor, which is a
highly transcendental object. By using the approach in this paper,
we were able to use much more elementary arguments to
show the existence of the global parametrix in the hyperelliptic
case, which allows us to avoid the theta divisor completely.

\subsection{The model Riemann-Hilbert problem}

The model RH problem that arises in the application of
the Deift-Zhou steepest descent method to orthogonal polynomials
is the following.

We are given $N$ intervals $[a_k,b_k]$, $k=1,\ldots, N$ on the
real line ordered so that $b_k < a_{k+1}$ for $k=1, \ldots, N-1$. We
also have $N-1$ real numbers $\alpha_k$ for $k=1, \ldots, N-1$, and
an integer $n$. The aim is then to construct a solution of the
following RH problem.

\begin{definition} \label{def:model}
The model Riemann-Hilbert problem is the following
RH problem for a $2 \times 2$-matrix
valued function $M: \mathbb C \setminus [a_1, b_N] \to \mathbb C^{2\times 2}$:
\begin{enumerate}
\item[\rm (a)] $M$ is analytic on $\mathbb C \setminus [a_1,b_N]$,
\item[\rm (b)] $M$ has jumps $M_+(x) = M_-(x) J_M(x)$ for $x \in [a_1, b_N]$
where
\begin{align} \label{eq:JM1}
    J_M(x) & = \begin{pmatrix} 0 & 1 \\ -1 & 0 \end{pmatrix}, \qquad
    \text{for } x \in (a_k,b_k),
\end{align}
for $k=1, \ldots, N$,
\begin{align} \label{eq:JM2}
    J_M(x) & = \begin{pmatrix} e^{-2\pi i n \alpha_k} & 0 \\
        0 & e^{2\pi i n \alpha_k} \end{pmatrix},
    \qquad \text{for } x \in (b_k, a_{k+1}),
    \end{align}
for $k=1, \ldots, N-1$,
\item[\rm (c)] $M(z) = I + O(1/z)$ as $z \to \infty$,
\item[\rm (d)] $M$ has at most fourth-root singularities near the
endpoints $a_k$ and $b_k$.
\end{enumerate}
\end{definition}

Clearly the jump condition \eqref{eq:JM2} in
the  model RH problem only depends on the value of
the numbers $n \alpha_k$ modulo the integers,
and so we may (and usually do) consider them to belong to $\mathbb{R} \slash \mathbb{Z}$.

The model RH problem was stated and solved in \cite{DKMVZ}.
The goal of this paper is to present an alternative
construction and to show that away from the endpoints $a_k$ and $b_k$, the
solution and its inverse are bounded in $n$.

As already mentioned, we use the hyperelliptic Riemann surface with
cuts along the intervals $[a_k,b_k]$. The main role is played by the
meromorphic differentials $\omega_P^{(\nu)}$, $\nu = 1,2$,
introduced in Definition \ref{def:1form}. The meromorphic
differential depends on $N-1$ points $P_1, \ldots, P_{N-1}$ on the
Riemann surface. The heart of the matter is Theorem
\ref{thm:Psibijection} which states that a suitably defined mapping
from $P_1, \ldots, P_{N-1}$ to a vector of $B$-periods is bijective.
This result allows us to take the points so that the $B$-periods are
exactly the numbers $2 \pi i n \alpha_k$, $k=1, \ldots, N-1$, that
appear in the jump condition \eqref{eq:JM2}. In Section
\ref{section3} we define the corresponding Abelian integrals, which
after exponentation lead to functions $v_j^{(\nu)}$, $j,\nu=1,2$,
that are used in the Definition \ref{def:M} of the solution of the
model RH problem.

Some further properties of $M$ are discussed in Section \ref{section4},
including the fact that $M$ and $M^{-1}$ are uniformly bounded
in $n$ if we stay away from the endpoints $a_k$ and $b_k$.
In the final Section \ref{section5} we present an alternative
construction for the second row of $M$, assuming that we know
the first row of $M$.

\section{Meromorphic differentials}
\label{section2}

The construction will be based on meromorphic differentials
(Abelian differentials of the third kind)
on the two-sheeted Riemann surface $\mathcal R$ for the equation
\begin{align} \label{eq:SurfaceEquation}
    w^2 = \prod_{k=1}^N (z-a_k)(z-b_k)
    \end{align}
which is obtained by gluing together two copies of
$\overline{\mathbb C} \setminus \bigcup_{k=1}^N [a_k,b_k]$ along the
cuts $[a_k,b_k]$ in the usual crosswise manner. The surface is
compact (we add a point at infinity to each sheet) and has genus
$N-1$.

We need a few standard facts about Riemann surfaces. Our main
reference is \cite{FK}.

\begin{figure}[t]
\centering
\begin{overpic}[width=15cm]{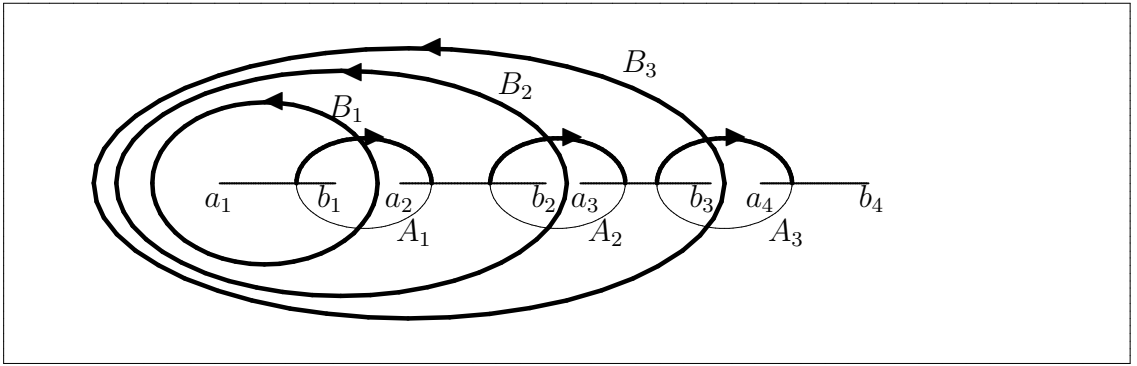}
    \put(18,14){$a_1$} \put(28,14){$b_1$}
    \put(34,14){$a_2$} \put(47,14){$b_2$}
    \put(50.5,14){$a_3$} \put(61,14){$b_3$}
    \put(66,14){$a_4$} \put(76,14){$b_4$}
    \put(35,11){$A_1$} \put(29,22){$B_1$}
    \put(52,11){$A_2$} \put(44,24){$B_2$}
    \put(68,11){$A_3$} \put(55,26){$B_3$}
    \put(23,22.7){$\blacktriangleleft$}
    \put(30,25.3){$\blacktriangleleft$}
    \put(37,27.5){$\blacktriangleleft$}
    \put(32,19.5){$\blacktriangleright$}
    \put(49.5,19.5){$\blacktriangleright$}
    \put(64,19.5){$\blacktriangleright$}
\end{overpic}
\caption{The canonical homology basis $(A_1, \ldots, A_{N-1},B_1,
\ldots, B_{N-1})$ for the case $N=4$. The thick contours are on the
first sheet and the thin contours are on the second sheet of the
Riemann surface $\mathcal R$. }
 \label{fig:canonicalhomology}
\end{figure}

We will now define the canonical homology basis on this Riemann
surface. We choose the cycles $A_j$ and $B_j$ for a canonical
homology basis
\[ (A_1, \ldots, A_{N-1}; B_1, \ldots, B_{N-1}) \]
as indicated in Figure \ref{fig:canonicalhomology} for the case
$N=4$. That is, $B_j$ is a cycle on the first sheet that encircles
the interval $[a_1, b_j]$ once in the counterclockwise direction. We
choose $B_j$ to be symmetric with respect to the real axis on the
first sheet. The cycle $A_j$ has a part in the upper half-plane of
the first sheet, a part in the lower half-plane of the second sheet,
and passes through the cuts $[a_j,b_j]$ and $[a_{j+1},b_{j+1}]$ with
an orientation also indicated in Figure \ref{fig:canonicalhomology}.

There is an anti-holomorphic involution $\phi$ on $\mathcal R$ which
maps $z$ to $\overline{z}$ on the same sheet. The set of fixed
points of $\phi$ are the real ovals, which are $N$ closed contours
$\Gamma_j$, $j=0, \ldots, N-1$, where for $j=1, \ldots, N-1$,
$\Gamma_j$ is the union of the two intervals $[b_j,a_{j+1}]$ from
both sheets and $\Gamma_0$ is unbounded and contains the intervals
from $a_1$ and $b_N$ to the two points at infinity. Thus
\begin{align} \label{def:gammaj}
    \Gamma_j & = \{ (z,w) \in \mathcal R \mid b_j \leq z \leq a_{j+1} \},
        \quad j=1, \ldots, N-1.
    \end{align}
The cycle $A_j$ is homotopic to $\Gamma_j$ but
we choose $A_j$ to be disjoint from $\Gamma_j$.

For each $j = 1, \ldots, N-1$ we choose a point $P_j \in \Gamma_j$.
We are going to associate with the $N-1$ points
$(P_1,\ldots,P_{N-1})$ and an index $\nu \in \{1,2\}$ a meromorphic
differential $\omega = \omega_P^{(\nu)}$.

\begin{definition} \label{def:1form}
The meromorphic differential $\omega =\omega_P^{(\nu)}$ associated
with $(P_1,\ldots,P_{N-1}) \in \Gamma_1 \times \cdots \times
\Gamma_{N-1}$ and $\nu \in \{1,2\}$, is defined by the following
properties:
\begin{itemize}
\item $\omega_P^{(\nu)}$ has simple poles at the points
   $a_1, b_1, \ldots, a_N, b_N$, $P_1, \ldots, P_{N-1}$ with residues
\begin{align} \label{eq:res1}
    \Res(\omega_P^{(\nu)}, a_j)  & = \Res(\omega_P^{(\nu)}, b_j) = -\tfrac{1}{2},
        \qquad j=1,\ldots, N, \\
        \label{eq:res2}
    \Res(\omega_P^{(\nu)}, P_j) & = 1,   \qquad j=1, \ldots, N-1,
    \end{align}
\item $\omega_P^{(1)}$ has a simple pole at $\infty_2$ (the point
at infinity on the second sheet) and $\omega_P^{(2)}$ has a simple
pole at $\infty_1$ (the point at infinity on the first sheet) with residue
\begin{align} \label{eq:res3}
    \Res(\omega_P^{(1)}, \infty_2) & = \Res(\omega_P^{(2)}, \infty_1) =1.
    \end{align}
\item The differential $\omega_P^{(\nu)}$ is holomorphic elsewhere.
\item The $A$-periods satisfy:
\begin{align} \label{eq:Aperiods}
    \int_{A_j} \omega_P^{(\nu)} = 0, \qquad j=1, \ldots, N-1.
    \end{align}
Note that the points $P_j$ are not on any of the $A$-cylces.
\end{itemize}
\end{definition}

The meromorphic differential $\omega_P^{(\nu)}$ exists and is
uniquely defined by the properties
\eqref{eq:res1}--\eqref{eq:Aperiods}. Indeed, a simple count shows
that the sum of the residues in \eqref{eq:res1}--\eqref{eq:res3} is
equal to zero which is a necessary and sufficient condition for the
meromorphic differential to exist, see for example Theorem II.5.3 in
\cite{FK}. The residue conditions determine the meromorphic
differential up to a holomorphic differential. The vector space of
holomorphic differentials has dimension $N-1$, and the $N-1$
conditions in \eqref{eq:Aperiods} determine the meromorphic
differential uniquely.

If one or more of the $P_j$'s coincide with a branch point, then the
residue conditions \eqref{eq:res1}--\eqref{eq:res2} have to be modified appropriately.
For example, if $P_j = b_j$ then
\[ \Res(\omega_P^{(\nu)}, P_j) = \tfrac{1}{2}. \]
In this way, the meromorphic differential $\omega_P^{(\nu)}$ depends
continuously on the $P_j$'s. This fact will play a role in the proof of Proposition \ref{prop:Psicontinuous} below.

The anti-holomorphic involution $\phi$ can be used to map
$\omega_P^{(\nu)}$ to a meromorphic differential
$\phi^{\#}(\omega_P^{(\nu)})$ in an obvious way. If $\omega_P^{(\nu)}$
is equal to $f_j(z) dz$ for some meromorphic function $f_j$ on sheet
$j$ for $j=1,2$, then $\phi^{\#}(\omega_P^{(\nu)})$ is equal to
\[ \overline{f_j(\overline{z})} \, dz \]
on sheet $j$. A crucial property is that $\omega^{(\nu)}_P$ is
invariant under the map $\phi^{\#}$.

\begin{lemma} \label{lem:omegasymmetry}
For every $(P_1, \ldots, P_{N-1}) \in \Gamma_1 \times \cdots \times
\Gamma_{N-1}$ and $\nu \in \{1,2\}$, we have
\begin{equation} \label{eq:phiomegaP}
    \omega_P^{(\nu)} = \phi^{\#}(\omega_P^{(\nu)}).
    \end{equation}
\end{lemma}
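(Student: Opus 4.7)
My strategy is to verify that $\phi^{\#}(\omega_P^{(\nu)})$ satisfies each of the defining properties of Definition \ref{def:1form}, and then invoke the uniqueness statement immediately following that definition to conclude \eqref{eq:phiomegaP}. Thus I would check, in turn, that $\phi^{\#}(\omega_P^{(\nu)})$ has exactly the required poles, the same residues, and vanishing $A$-periods.

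For the pole/residue data I would use the fact that every pole of $\omega_P^{(\nu)}$ is a fixed point of $\phi$: the $a_k,b_k$ are real, hence fixed; each $P_j$ lies on $\Gamma_j$, which by construction is a component of the real ovals $\operatorname{Fix}(\phi)$; and $\infty_1,\infty_2$ are each fixed (since $\phi$ preserves sheets). From the local formula $f(z)\,dz \mapsto \overline{f(\bar z)}\,dz$ one sees that at any fixed point of $\phi$ the operation $\phi^{\#}$ complex-conjugates the residue, so after a short local-coordinate computation at the branch points (using $\tau$ with $\tau^2 = z-a_j$, respectively $\tau^2=z-b_j$) and at infinity (using $1/z$), the residues of $\phi^{\#}(\omega_P^{(\nu)})$ are the complex conjugates of those of $\omega_P^{(\nu)}$. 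Since the residues in \eqref{eq:res1}--\eqref{eq:res3} are all real, $\phi^{\#}(\omega_P^{(\nu)})$ shares the pole divisor and residues of $\omega_P^{(\nu)}$ exactly.

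For the normalization \eqref{eq:Aperiods} I would start from the change-of-variables identity
\[
\int_{\gamma}\phi^{\#}(\omega) \;=\; \overline{\int_{\phi(\gamma)}\omega},
\]
valid for any cycle $\gamma$ on $\mathcal R$, which is immediate from parametrizing and substituting $u=\bar z$. Reading off Figure \ref{fig:canonicalhomology}, the cycle $A_j$ has a part in the upper half of sheet one and a part in the lower half of sheet two, so applying $\phi$ (complex conjugation on each sheet) produces the same point set traversed with reversed orientation; in homology this means $\phi_*(A_j) = -A_j$. Combined with \eqref{eq:Aperiods} one obtains
\[
\int_{A_j}\phi^{\#}(\omega_P^{(\nu)}) \;=\; \overline{\int_{-A_j}\omega_P^{(\nu)}} \;=\; -\overline{\int_{A_j}\omega_P^{(\nu)}} \;=\; 0,
\]
so $\phi^{\#}(\omega_P^{(\nu)})$ also has vanishing $A$-periods. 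Uniqueness then forces $\phi^{\#}(\omega_P^{(\nu)}) = \omega_P^{(\nu)}$.

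The only step requiring genuine geometric care is the identification $\phi_*(A_j) = -A_j$: the orientation conventions for $A_j$ in Figure \ref{fig:canonicalhomology} must be matched carefully against the sheet-preserving anti-holomorphic flip, and one has to be sure that no other $A$- or $B$-cycle contribution enters (it does not, since $A_j$ is homotopic to $\Gamma_j\subset\operatorname{Fix}(\phi)$, so $\phi(A_j)$ is homotopic to $A_j$ as an unoriented cycle). The remainder of the argument is bookkeeping with residues under complex conjugation together with the uniqueness of the normalized Abelian differential of the third kind.
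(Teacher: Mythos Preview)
Your overall strategy---verify that $\phi^{\#}(\omega_P^{(\nu)})$ satisfies the defining properties and invoke uniqueness---is exactly the paper's approach, and your treatment of the pole and residue data is correct. The error is in the $A$-period step.

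You claim $\phi_*(A_j) = -A_j$, but in fact $\phi_*(A_j) = +A_j$ in $H_1(\mathcal R)$. To see this, recall that $A_j$ is homotopic to the real oval $\Gamma_j$, and $\phi$ fixes $\Gamma_j$ \emph{pointwise}; hence $\phi$ preserves the orientation of $\Gamma_j$, and therefore $\phi(A_j)$ is homotopic to $A_j$ with the same orientation. (Contrast this with the $B$-cycles, where the paper later uses $\phi(B_k)=-B_k$: there $B_k$ lies entirely on one sheet and complex conjugation reverses the sense of rotation.) Your sentence ``applying $\phi$ \ldots\ produces the same point set traversed with reversed orientation'' is also incorrect on its face: $A_j$ occupies the upper half of sheet~1 and the lower half of sheet~2, whereas $\phi(A_j)$ occupies the \emph{lower} half of sheet~1 and the \emph{upper} half of sheet~2---these are disjoint point sets away from the cuts.

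Because $\phi(A_j)$ is homotopic to $+A_j$ (not $-A_j$), the vanishing of $\int_{A_j}\phi^{\#}(\omega_P^{(\nu)})$ does not follow from $\int_{A_j}\omega_P^{(\nu)}=0$ by homology alone. One must deform $\phi(A_j)$ to $A_j$ across the poles $b_j$, $P_j$, $a_{j+1}$ lying on $\Gamma_j$, and it is the fact that their residues sum to $-\tfrac12 + 1 - \tfrac12 = 0$ that makes the $A$-period of $\phi^{\#}(\omega_P^{(\nu)})$ vanish. This residue cancellation is the substantive content of the $A$-period step in the paper's proof, and your argument bypasses it via an incorrect homology identification.
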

\begin{proof}
Since all poles of $\omega_P^{(\nu)}$ are invariant under $\phi$, the
meromorphic differential $\phi^{\#}(\omega_P^{(\nu)})$ has the same
(simple) poles as $\omega_P^{(\nu)}$ with the same residues. For the
$A$-periods we have by definition of $\phi$,
\[ \int_{A_j} \phi^{\#}(\omega_P^{(\nu)}) = \overline{\int_{\phi(A_j)} \omega_P^{(\nu)}},
    \qquad j =1, \ldots, N-1. \]
The cycle $\phi(A_j)$ is homotopic to $A_j$ in $\mathcal R$. In the
process of deforming $\phi(A_j)$ to $A_j$ we pick up residue
contributions from the poles of $\omega_P^{(\nu)}$ at $b_j$, $P_j$ and
$a_{j+1}$. Since the combined residue is $-\tfrac{1}{2} + 1 -
\tfrac{1}{2} = 0$, it follows that
\[ \int_{A_j} \phi^{\#}(\omega_P^{(\nu)}) = \overline{\int_{A_j} \omega_P^{(\nu)}} = 0. \]
Therefore $\phi^{\#}(\omega_P^{(\nu)})$ has all the properties that
characterize $\omega_P^{(\nu)}$ and the lemma follows.
\end{proof}

\subsection{The map $\Psi^{(\nu)}$ from $(P_1, \ldots, P_{N-1})$ to the $B$-periods}
The differential $\omega_P^{(\nu)}$ has a vector of $B$-periods and
it is convenient for us to divide by $2 \pi i$. So we define
$(\beta_1, \ldots, \beta_{N-1})$ with
\begin{equation} \label{eq:betak}
    \beta_k = \frac{1}{2\pi i} \int_{B_k} \omega_P^{(\nu)}.
    \end{equation}
Then by mapping the points $(P_1,\ldots,P_{N-1})$ to the $B$-periods
of the differential $\omega_P^{(\nu)}$, we obtain a map from
$\Gamma_1\times\cdots\times\Gamma_{N-1}$ into $\mathbb{C}^{N-1}$. We
will show that this map is well-defined  from
$\Gamma_1\times\cdots\times\Gamma_{N-1}$ to $\left(\mathbb{R} \slash
\mathbb{Z}\right)^{N-1}$.
\begin{proposition} \label{prop:Psicontinuous}
For $\nu =1,2$, the map
\begin{multline} \label{def:Psi}
    \Psi^{(\nu)} : \Gamma_1 \times \cdots \times \Gamma_{N-1} \to \left( \mathbb R \slash \mathbb Z \right)^{N-1} : \\
    (P_1, \ldots, P_{N-1})  \mapsto
    (\beta_1, \ldots, \beta_{N-1}) =
       \frac{1}{2\pi i} \left(\int_{B_1} \omega_P^{(\nu)}, \ldots, \int_{B_{N-1}} \omega_P^{(\nu)} \right)
       \end{multline}
 is well-defined and continuous.
\end{proposition}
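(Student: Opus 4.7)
The plan has two parts, corresponding to the two assertions of the proposition.

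\textbf{Real-valuedness of $\beta_k$.} The key input is Lemma~\ref{lem:omegasymmetry}. I would use the same identity that appears in its proof,
\[
\int_{\gamma}\phi^{\#}(\omega) \;=\; \overline{\int_{\phi(\gamma)}\omega},
\]
valid for any cycle $\gamma$ avoiding the poles of $\omega$. The cycle $B_k$ lies on the first sheet, is symmetric with respect to the real axis, and is oriented counterclockwise; since $\phi$ acts as complex conjugation on the first sheet, it maps $B_k$ to itself set-wise but reverses the orientation, so $\phi(B_k) = -B_k$ as an oriented cycle (after a small symmetric perturbation, if necessary, to ensure that $B_k$ does not pass through any $P_j$ at its real-axis crossings). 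Combining this with the invariance $\omega_P^{(\nu)} = \phi^{\#}(\omega_P^{(\nu)})$ gives
\[
\int_{B_k}\omega_P^{(\nu)} \;=\; \overline{\int_{\phi(B_k)}\omega_P^{(\nu)}} \;=\; -\overline{\int_{B_k}\omega_P^{(\nu)}},
\]
so $\int_{B_k}\omega_P^{(\nu)}$ is purely imaginary and $\beta_k \in \mathbb{R}$.

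\textbf{Well-definedness modulo $\mathbb{Z}$.} Different symmetric representatives of the homology class $[B_k]$ that avoid the poles of $\omega_P^{(\nu)}$ produce integrals differing by $2\pi i$ times the sum of the residues of $\omega_P^{(\nu)}$ at the poles enclosed in the bounding $2$-chain. The only residues that can be picked up by such a deformation (staying away from branch points) are those at the $P_j$'s (equal to $1$) and at the point at infinity on the relevant sheet (also equal to $1$); all are integers, so the resulting ambiguity is an integer multiple of $2\pi i$. Dividing by $2\pi i$, the value $\beta_k$ is well defined in $\mathbb{R}/\mathbb{Z}$. The half-integer residues at branch points do not enter, since branch points can always be avoided by representatives of $[B_k]$.

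\textbf{Continuity.} As remarked in the text after Definition~\ref{def:1form}, $\omega_P^{(\nu)}$ depends continuously on $(P_1,\ldots,P_{N-1})$, including at configurations where some $P_j$ coincides with a branch point (the residue conventions having been chosen precisely to make this so). Given a fixed configuration $(P_1^{*},\ldots,P_{N-1}^{*})$, choose a symmetric representative of $B_k$ that is disjoint from all the $P_j^{*}$. For nearby configurations the same representative still avoids the moving $P_j$'s, and the integral varies continuously in $\mathbb{C}$ by the continuity of the differential. When a deformation of the $P_j$'s is large enough that some $P_j$ crosses $B_k$, one replaces $B_k$ by a homologous representative avoiding the new configuration; this changes the integral by $\pm 2\pi i$, which is zero in $\mathbb{R}/\mathbb{Z}$. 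Hence $\Psi^{(\nu)}$ is continuous.

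The only subtlety is the residue bookkeeping that justifies the modular reduction, and this is precisely what makes the map land in $\mathbb{R}/\mathbb{Z}$ rather than $\mathbb{R}$; once one observes that every residue of $\omega_P^{(\nu)}$ encountered away from branch points is an integer, both well-definedness and continuity are immediate.
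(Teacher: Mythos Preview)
Your argument is correct and follows essentially the same route as the paper: continuous dependence of $\omega_P^{(\nu)}$ on the $P_j$'s, the integer residue at $P_k$ absorbing the jump when $P_k$ crosses $B_k$, and the anti-holomorphic involution $\phi$ with $\phi(B_k)=-B_k$ giving purely imaginary $B$-periods. One small caveat: your sentence ``the half-integer residues at branch points do not enter, since branch points can always be avoided by representatives of $[B_k]$'' is not the right justification---two representatives that both avoid the branch points may still bound a $2$-chain containing some of them, in which case half-integer contributions would appear---but since the only deformations you actually need in the continuity argument are small local ones near a $P_j$ in a gap $(b_k,a_{k+1})$, no branch point is ever crossed and the conclusion stands.
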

\begin{proof}
Since the meromorphic differential $\omega_P^{(\nu)}$ depends continuously
on the $P_j$'s, we then also have that the $\beta_k$ from \eqref{eq:betak}
is well-defined and continuous  in the $P_j$'s,  unless $P_k$ is on $B_k$.

Recall that $B_k$ intersects the interval $(a_k,
b_{k+1})$ in one point on the first sheet. The value of $\beta_k$ then makes a jump
when  $P_k \in \Gamma_k$ passes through this intersection
point. As the residue of $\omega_P^{(\nu)}$ at the pole $P_k$ is an integer
(in fact, it is $1$, see \eqref{eq:res2}), the jump in $\beta_k$ is by an integer
value. Since we consider values modulo $\mathbb Z$, the map $\Psi^{(\nu)}$
is thus well-defined and continuous
from $\Gamma_1\times\cdots\times\Gamma_{N-1}$ into $\left( \mathbb C
\slash \mathbb Z \right)^{N-1}$.

Now let us show that the $\beta_k$ are real and hence $\Psi^{(\nu)}$
is really a map into $\left( \mathbb R \slash \mathbb Z
\right)^{N-1}$. Recall that the cycle $B_k$ is chosen to be
symmetric with respect to the real axis. Therefore $\phi(B_k) = -
B_k$. Since $\omega_P^{(\nu)} = \phi^{\#}(\omega_P^{(\nu)})$ by
Lemma \ref{lem:omegasymmetry}, we then have
\begin{align*}
    \int_{B_k} \omega_P^{(\nu)} & = \int_{B_k} \phi^{\#}(\omega_P^{(\nu)})
    = \overline{\int_{\phi(B_k)} \omega_P^{(\nu)}}
     = \overline{\int_{-B_k} \omega_P^{(\nu)}}
    = - \overline{\int_{B_k} \omega_P^{(\nu)}}.
    \end{align*}
Thus $\int_{B_k} \omega_P^{(\nu)}$ is purely imaginary and so
$\beta_k$ is real indeed.
\end{proof}

The main result of this section is that the map \eqref{def:Psi} is a
bijection. This would imply that there exists a unique set of points
$(P_1^{(\nu)},\ldots,P_{N-1}^{(\nu)})$ such that the $B$-periods of
the differential $\omega_P^{(\nu)}$ are given by $2\pi i n
\alpha_j$. The bijectivity proof relies on the fact that the divisor
corresponding to any choice of points $P_1, \ldots, P_{N-1}$ with
$P_j \in \Gamma_j$ is non-special.

We use additive notation for divisors and we write
\begin{equation} \label{eq:divisorD}
    D = \sum_{j=1}^{N-1} P_j.
    \end{equation}
A divisor \eqref{eq:divisorD}  is special if there exists a
non-constant holomorphic function on $\mathcal R \setminus \{ P_1,
\ldots, P_{N-1} \}$ with at most simple poles at the points $P_j$.
By the Riemann-Roch theorem, the divisor \eqref{eq:divisorD} is
special if and only if there exists a non-zero holomorphic
differential with zeros at each of the points $P_j$ for $j=1,
\ldots, N-1$.

\begin{lemma} \label{lem:nonspecial}
(see also Statement 1 in \cite{Kor}) If $P_j \in \Gamma_j$ for each
$j=1, \ldots, N-1$, then the divisor \eqref{eq:divisorD} is
non-special.
\end{lemma}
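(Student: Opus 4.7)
My plan is to invoke the Riemann-Roch characterization stated just before the lemma: the divisor $D = \sum_{j=1}^{N-1} P_j$ is special if and only if there exists a nonzero holomorphic differential $\eta$ on $\mathcal{R}$ that vanishes at every $P_j$. I will show that no such $\eta$ can exist. Since $\mathcal{R}$ is the hyperelliptic surface defined by $w^2 = \prod_{k=1}^N (z-a_k)(z-b_k)$ of genus $N-1$, the space of holomorphic differentials is $(N-1)$-dimensional with basis $\{z^k\,dz/w : k = 0, \ldots, N-2\}$. Consequently any candidate $\eta$ must take the form $\eta = p(z)\,dz/w$ with $p \in \mathbb{C}[z]$ of degree at most $N-2$.

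The next step is to reduce to the case that $p$ has real coefficients by exploiting the anti-holomorphic involution $\phi$. Because each $P_j$ lies in the fixed-point set $\Gamma_j$ of $\phi$, vanishing of $\eta$ at $P_j$ is equivalent to vanishing of $\phi^{\#}(\eta)$ at $P_j$. Using the description of $\phi^{\#}$ from the paper together with the identity $\overline{w(\bar z)} = w(z)$ (which holds with the standard real branch choice since $w^2 \in \mathbb{R}[z]$), one checks that $\phi^{\#}(\eta) = \bar{p}(z)\,dz/w$, where $\bar{p}$ denotes the polynomial whose coefficients are the complex conjugates of those of $p$. Therefore $\eta + \phi^{\#}(\eta)$ and $i(\eta - \phi^{\#}(\eta))$ are both $\phi^{\#}$-invariant holomorphic differentials vanishing at every $P_j$, and at least one of them is nonzero since $\eta \neq 0$. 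I may therefore assume that $p \in \mathbb{R}[z]$.

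To finish, I will apply a pigeonhole argument to the $z$-projections. The intervals $[b_j, a_{j+1}]$, for $j = 1, \ldots, N-1$, are pairwise disjoint, so the $z$-coordinates $x_1 < \cdots < x_{N-1}$ of $P_1, \ldots, P_{N-1}$ are $N-1$ distinct real numbers. At an ordinary point $P_j$, vanishing of $\eta$ at $P_j$ amounts to $p(x_j) = 0$; when $P_j$ is a branch point, the local parameter $t$ with $t^2 = z - x_j$ gives $\eta = 2 p(x_j + t^2)\,dt / g(t)$ with $g$ holomorphic and nonvanishing, so again $p(x_j) = 0$. Thus $p$ has at least $N-1$ distinct real zeros while $\deg p \leq N-2$, forcing $p \equiv 0$ and hence $\eta = 0$, a contradiction. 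The main obstacle I anticipate is the careful verification that $\phi^{\#}$ acts on the coefficients of $p$ by complex conjugation, but this reduces to a routine branch-choice computation; handling branch-point $P_j$'s in the final step is similarly mild.
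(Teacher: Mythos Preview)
Your argument is correct and follows essentially the same route as the paper: both proofs use that every holomorphic differential on $\mathcal{R}$ has the form $p(z)\,dz/w$ with $\deg p \le N-2$, observe that the $P_j$'s project to $N-1$ distinct points in the $z$-plane, and conclude that no nonzero such $p$ can vanish at all of them.

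The only difference is your middle paragraph, which reduces to the case $p \in \mathbb{R}[z]$ via the involution $\phi$; this step is superfluous. Your final pigeonhole argument already works verbatim for an arbitrary complex polynomial $p$: a nonzero polynomial of degree at most $N-2$ cannot have $N-1$ distinct roots, real or not. The paper simply omits this detour.
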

\begin{proof}
The holomorphic differentials on the hyperelliptic Riemann surface
defined by \eqref{eq:SurfaceEquation} are of the form
\[ \frac{p(z)}{w} dz, \]
where $p$ is a polynomial of degree $\leq N-2$. Therefore the zeros
of a non-zero holomorphic differential project onto at most $N-2$
points in the complex $z$-plane. The points $P_j \in \Gamma_j$,
$j=1, \ldots, N-1$, project onto $N-1$ distinct points, and so there
can be no non-zero holomorphic differential with a zero at each of
the $P_j$'s.
\end{proof}

In the proof of Theorem~\ref{thm:Psibijection}, which is the main result of
this section, we also need the invariance of domain theorem of Brouwer \cite{Bro},
which is a classical result from  topology.
See e.g.\ \cite[section XVII 3]{Dug} or \cite{Ful} for more recent accounts.
We state the theorem here for the reader's convenience.

\begin{theorem} \label{thm:invariance} \textbf{(invariance of domain)}
    If $U$ is an open subset of $\mathbb R^n$ and
    $f : U \to \mathbb R^n$ is an injective continuous map,
    then $f$ is open (i.e., $f$ maps open subsets of $U$
    to open subsets of $\mathbb R^n$).
\end{theorem}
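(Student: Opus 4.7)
The claim is local, so it suffices to show that each $x_0 \in U$ has a neighborhood in $U$ whose image under $f$ is open. Fix such an $x_0$, choose $r > 0$ with $\overline{B}(x_0, r) \subset U$, and write $B = B(x_0, r)$ and $S = \partial B$. Because $\overline{B}$ is compact and $f$ is continuous and injective, $f|_{\overline{B}}$ is a homeomorphism onto its image; in particular $K := f(\overline{B})$ is a topological closed ball and $L := f(S)$ is a topological $(n-1)$-sphere embedded in $\mathbb{R}^n$, with $K \setminus L = f(B)$.

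The plan is then to extract the openness of $f(B)$ from two consequences of Alexander duality in $S^n$. Applied to $L$, which is homeomorphic to $S^{n-1}$, Alexander duality yields $\tilde H_0(\mathbb{R}^n \setminus L) \cong \mathbb{Z}$, so that $\mathbb{R}^n \setminus L$ has exactly two connected components, one bounded and one unbounded (this is the Jordan--Brouwer separation theorem). Applied to $K$, which is contractible, it gives $\tilde H_0(\mathbb{R}^n \setminus K) = 0$, so that $\mathbb{R}^n \setminus K$ is connected. Granted these two facts, I would argue as follows. The set $\mathbb{R}^n \setminus K$ is connected, contains a neighborhood of infinity, and is disjoint from $L$, so it lies entirely in the unbounded component of $\mathbb{R}^n \setminus L$. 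Consequently the bounded component of $\mathbb{R}^n \setminus L$ is disjoint from $\mathbb{R}^n \setminus K$, hence contained in $K \setminus L = f(B)$. Conversely $f(B)$ is connected, bounded, and disjoint from $L$, so it lies in a single component of $\mathbb{R}^n \setminus L$ and must therefore equal the bounded one. Thus $f(B)$ coincides with an open subset of $\mathbb{R}^n$ and, in particular, contains an open neighborhood of $f(x_0)$. Since every $f(x_0)$ with $x_0 \in U$ is interior to $f(U)$, the map $f$ is open.

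The serious obstacle is precisely the topological input: although the deduction above is elementary once the two duality statements are granted, the facts that a topologically embedded $S^{n-1}$ separates $\mathbb{R}^n$ into two regions while a topologically embedded closed ball does not, are nontrivial results that ultimately rest on singular (or \v{C}ech) cohomology. Everything else in the argument is formal. An alternative route would be to invoke Brouwer degree theory and to prove that $\deg(f, B, f(x_0)) = \pm 1$, and then to conclude by the homotopy invariance of degree along the straight-line homotopy joining $f(x_0)$ to any nearby point $y$ avoiding $f(S)$; however, the nonvanishing of the local degree of a topological embedding is essentially the same topological input in a different guise.
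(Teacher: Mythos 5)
The paper does not actually prove this theorem: it is quoted as a classical result of Brouwer, with the proof outsourced entirely to the references \cite{Bro}, \cite{Dug}, \cite{Ful}. So there is no in-paper argument to compare yours against line by line. What you have written is the standard modern proof via Alexander duality (essentially the one in Hatcher or in \cite{Dug}): reduce to showing $f(B)$ is open for a small closed ball $\overline{B}\subset U$, use compactness plus injectivity to see that $f|_{\overline{B}}$ is an embedding, and then combine the two duality computations $\tilde H_0(\mathbb R^n\setminus L)\cong\mathbb Z$ and $\tilde H_0(\mathbb R^n\setminus K)=0$ to identify $f(B)$ with the bounded component of $\mathbb R^n\setminus L$. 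The formal deductions you make from these two facts are correct; in particular, the step ``$f(B)$ must equal the bounded component'' is justified not by boundedness alone (a bounded set can sit inside the unbounded component) but by the inclusion $W_0\subseteq f(B)$ you established just before, which forces $f(B)$ to meet, hence lie in, the bounded component. Two very minor points: the case $n=1$ (where passing from $S^n\setminus L$ to $\mathbb R^n\setminus L$ by deleting the point at infinity needs care) should be disposed of separately by monotonicity, and one should say a word about why the two components are one bounded and one unbounded.

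The only substantive shortfall is the one you name yourself: the two Alexander duality statements, i.e.\ that an embedded $S^{n-1}$ separates $\mathbb R^n$ into exactly two pieces while an embedded closed ball does not separate it at all, are taken as black boxes. These are precisely the hard content of the theorem, so as a self-contained proof your text is incomplete; but it is a correct and honest reduction of invariance of domain to standard algebraic topology, which is no less than what the paper itself does by citing the literature. Your closing remark that the degree-theoretic route (showing $\deg(f,B,f(x_0))=\pm 1$ and using homotopy invariance) repackages the same topological input is also accurate.
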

Of course, the theorem readily extends to injective continuous maps between manifolds
of the same dimension, which is what we will use in the proof
of Theorem \ref{thm:Psibijection}.

\begin{theorem} \label{thm:Psibijection}
The map $\Psi^{(\nu)}$ defined in \eqref{def:Psi} is a bijection
from $\Gamma_1 \times \cdots \times \Gamma_{N-1}$ to $\left( \mathbb
R \slash \mathbb Z \right)^{N-1}$.
\end{theorem}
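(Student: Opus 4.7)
The plan is to prove both injectivity and surjectivity of $\Psi^{(\nu)}$. The domain $\Gamma_1 \times \cdots \times \Gamma_{N-1}$ and the codomain $(\mathbb R / \mathbb Z)^{N-1}$ are both compact, connected, real manifolds of dimension $N-1$, since every real oval $\Gamma_j$ is topologically a circle. Continuity of $\Psi^{(\nu)}$ is already granted by Proposition~\ref{prop:Psicontinuous}, so only bijectivity remains.

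For injectivity, assume $\Psi^{(\nu)}(P_1,\ldots,P_{N-1}) = \Psi^{(\nu)}(Q_1,\ldots,Q_{N-1})$ and set $\eta := \omega_P^{(\nu)} - \omega_Q^{(\nu)}$. By the residue normalizations \eqref{eq:res1}--\eqref{eq:res3}, the two differentials have identical principal parts at every branch point and at $\infty_1, \infty_2$, so these all cancel in $\eta$. What remains are at worst simple poles at the $P_j$'s with residue $+1$ and at the $Q_j$'s with residue $-1$. The $A$-periods of $\eta$ vanish by \eqref{eq:Aperiods}, and the $B$-periods of $\eta$ lie in $2\pi i \mathbb Z$ because the two tuples of $\beta_k$'s agree modulo $\mathbb Z$. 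Therefore $F(p) := \exp \int_{p_0}^p \eta$ is a well-defined single-valued meromorphic function on $\mathcal R$, with a simple zero at each uncancelled $P_j$ and a simple pole at each uncancelled $Q_j$. In particular $F$ lies in the Riemann-Roch space $L(Q_1 + \cdots + Q_{N-1})$. By Lemma~\ref{lem:nonspecial} this divisor is non-special, so Riemann-Roch yields $\dim L(Q_1 + \cdots + Q_{N-1}) = 1$, forcing $F$ to be constant. If the tuples $(P_j)$ and $(Q_j)$ differ at any index, then $F$ has a zero, hence $F \equiv 0$, which is impossible for an exponential. Thus the tuples coincide.

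For surjectivity, Theorem~\ref{thm:invariance} applied to the continuous injection $\Psi^{(\nu)}$ between $(N-1)$-manifolds of equal dimension shows that the image is open in $(\mathbb R / \mathbb Z)^{N-1}$. That image is also closed by compactness of the domain and continuity of $\Psi^{(\nu)}$, and it is nonempty. Since $(\mathbb R / \mathbb Z)^{N-1}$ is connected, the image must be the whole torus, and $\Psi^{(\nu)}$ is bijective. The main obstacle I foresee is the careful bookkeeping that makes $F$ single-valued on $\mathcal R$: one has to simultaneously verify that all extraneous residues (at branch points and at infinity) cancel, that the $A$-periods vanish, and that the $B$-periods of $\eta$ are integer multiples of $2\pi i$. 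Once that is in place, Lemma~\ref{lem:nonspecial} together with Riemann-Roch finishes the injectivity, and invariance of domain plus connectedness finishes the surjectivity.
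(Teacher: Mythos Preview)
Your proof is correct and follows essentially the same route as the paper: injectivity via the exponentiated Abelian integral of the difference $\omega_P^{(\nu)}-\omega_Q^{(\nu)}$ together with non-speciality (Lemma~\ref{lem:nonspecial}), and surjectivity via invariance of domain plus compactness and connectedness. The only cosmetic difference is that the paper takes the difference in the opposite order (so the poles land on the $P_j$'s rather than the $Q_j$'s) and phrases the Riemann--Roch step as ``non-special $\Rightarrow$ constant'' without writing $\dim L=1$ explicitly.
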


\begin{proof}
We first prove that $\Psi^{(\nu)}$ is injective. Suppose $(P_1,
\ldots, P_{N-1})$ and $(Q_1, \ldots, Q_{N-1})$ are in $\Gamma_1
\times \cdots \times \Gamma_{N-1}$ so that
\[ \Psi^{(\nu)}(P_1, \ldots, P_{N-1}) = \Psi^{(\nu)}(Q_1, \ldots, Q_{N-1}). \]
 Let
$\omega_P^{(\nu)}$ and $\omega_Q^{(\nu)}$ be the corresponding
meromorphic differentials. Then $\omega_Q^{(\nu)} -
\omega_P^{(\nu)}$ has poles with residues $\pm 1$ at the points
$Q_j$ and $P_j$ only and all periods are integer multiples of $2\pi
i$. Let $P_0$ be a given base point different from any of the
$P_j$'s and $Q_j$'s. It then follows that
\[ \exp\left(\int_{P_0}^z \left(\omega_Q^{(\nu)} - \omega_P^{(\nu)}\right)\right),
    \qquad z \in \mathcal R, \]
is a meromorphic function on $\mathcal R$ with only possible poles
at $P_1, \ldots, P_{N-1}$. Since the divisor $D = \sum_{j=1}^{N-1}
P_j$ is non-special, see Lemma \ref{lem:nonspecial}, the
meromorphic function is a constant, which implies that
$\omega_Q^{(\nu)} = \omega_P^{(\nu)}$. Hence the $Q_j$'s and the $P_j$'s coincide: $(Q_1, \ldots, Q_{N-1})
= (P_1, \ldots, P_{N-1})$ and therefore $\Psi^{(\nu)}$ is injective.

To prove surjectivity we now note that $\Psi^{(\nu)}$ is an
injective and continuous (by Proposition \ref{prop:Psicontinuous})
map from the $N-1$-dimensional manifold
$\Gamma_1 \times \cdots \times \Gamma_{N-1}$ to the
$N-1$-dimensional manifold $(\mathbb R \slash \mathbb Z)^{N-1}$.
Thus $\Psi^{(\nu)}$ is open by Theorem~\ref{thm:invariance}.
It follows that
$\Psi^{(\nu)}(\Gamma_1 \times \cdots \times \Gamma_{N-1})$ is a
subset of $(\mathbb R \slash \mathbb Z)^{N-1}$ that is both open
(since $\Psi^{(\nu)}$ is open) and compact (since $\Psi^{(\nu)}$ is
continuous and $\Gamma_1 \times \cdots \times \Gamma_{N-1}$ is
compact). Since $(\mathbb R \slash \mathbb Z)^{N-1}$ is connected it
follows that $\Psi^{(\nu)}$ is surjective.
\end{proof}

\section{Construction of $M$}
\label{section3}

It will be a consequence of Theorem \ref{thm:Psibijection} that we
can construct the matrix $M$ that solves the model Riemann-Hilbert problem.

By Theorem \ref{thm:Psibijection} there exist $P_j^{(\nu)} \in
\Gamma_j$ for $j=1, \ldots, N-1$, $\nu =1,2$, so that
\begin{equation} \label{eq:Psiimage}
    \Psi^{(\nu)}(P_1^{(\nu)}, \ldots, P_{N-1}^{(\nu)}) = \left(n \alpha_1, \ldots, n \alpha_{N-1}\right)
    \end{equation}
where each $n \alpha_j$ is considered modulo $\mathbb Z$. Let
$\omega_P^{(\nu)}$ be the corresponding meromorphic differential. We then
have that
\begin{equation} \label{eq:Bperiods}
     \frac{1}{2\pi i} \int_{B_k} \omega_P^{(\nu)} \equiv n \alpha_k \quad \mod \mathbb Z,
     \qquad \text{for } k = 1, \ldots, N-1.
     \end{equation}

\subsection{Abelian integrals}
For $\nu =1,2$, we choose the base point
\begin{equation} \label{eq:basepoint}
    P_0 = \infty_{\nu}
\end{equation}
and we define the functions $u_1^{(\nu)}(z)$, $u_2^{(\nu)}(z)$
of a complex variable $z$ as follows.

\begin{definition}\label{def:abel}
Let $z \in \mathbb C \setminus \mathbb R$.
\begin{enumerate}
\item[\rm (a)] To define $u_{j}^{(\nu)}$ with $j=\nu$, we consider $z$ as a point on the
$j$th sheet of the Riemann surface. We define
\begin{equation} \label{def:u1z}
    u_{j}^{(\nu)}(z) = \int_{P_0}^z \omega_P^{(\nu)},  \qquad z \in \mathbb C \setminus \mathbb R,
    \quad j = \nu,
    \end{equation}
where the path of integration is on the $j$th sheet of the
Riemann surface and it does not intersect the real line, except for
the initial point $P_0$.
\item[\rm (b)] To define $u_{j}^{(\nu)}(z)$ with $j \neq \nu$, we consider $z$ as a point on the
$j$th sheet. We define
\begin{equation} \label{def:u2z}
    u_j^{(\nu)}(z) = \int_{P_0}^z \omega_P^{(\nu)},  \qquad z \in \mathbb C \setminus \mathbb
    R,\quad j \neq \nu,
    \end{equation}
where now the path of integration is as follows. If $\impart z > 0$
($ \impart z < 0$) then the path starts in the lower (upper)
half-plane of the $\nu$th sheet and passes to the $j$th sheet
via a cut $(a_k,b_k)$. It then stays in the upper (lower) half-plane
of the $j$th sheet.
\end{enumerate}
\end{definition}

Since $\omega_P^{(\nu)}$ has vanishing $A$-periods, as well as
vanishing $\phi^{\#}(A)$ periods, it does not matter which cut
$(a_k,b_k)$ is taken, and so $u_j^{(\nu)}(z)$  in \eqref{def:u2z} is
uniquely defined.

The functions $u_j^{(\nu)}$ are analytic on $\mathbb C \setminus \mathbb
R$ with the following jumps on $\mathbb R$.

\begin{lemma} \label{lem:u1u2jumps}
The functions $u_j^{(\nu)}$, $j,\nu =1,2$ satisfy the following jump
conditions for $x \in \mathbb R$.
\begin{enumerate}
\item[\rm (a)] For $x \in (a_k, b_k)$ with $k=1, \ldots, N$, we have
\begin{align}  \label{jumpu1a}
    u_{1,+}^{(\nu)}(x) & = u_{2,-}^{(\nu)}(x), \\ \label{jumpu1b}
    u_{2,+}^{(\nu)}(x) & = u_{1,-}^{(\nu)}(x).
\end{align}
\item[\rm (b)] For $x < a_1$ or $x > b_N$ we have
\begin{align} \label{jumpu2a}
    u_{j,+}^{(\nu)}(x) & = u_{j,-}^{(\nu)}(x), \qquad j = \nu, \\ \label{jumpu2b}
    u_{j,+}^{(\nu)}(x) & \equiv u_{j,-}^{(\nu)}(x) + \pi i \quad \mod 2 \pi i \, \mathbb Z,\quad j\neq \nu.
\end{align}
\item[\rm (c)] For $z \in \mathbb C \setminus \bigcup_{k=1}^N [a_k, b_k]$,  we
use $P_j(z)$ to denote the point on the $j$th sheet of $\mathcal R$
that corresponds to $z$. Then we have, for $x \in (b_k, a_{k+1})$
with $k=1,\ldots, N-1$,
\begin{align} \label{jumpu3a}
    u_{1,+}^{(1)}(x) & \equiv u_{1,-}^{(1)}(x) - 2\pi i n \alpha_k,
    \qquad \text{if } P_1(x) \neq P_k^{(1)}, \\
    \label{jumpu3b}
    u_{2,+}^{(1)}(x) & \equiv u_{2,-}^{(1)}(x) + 2 \pi i n \alpha_k + \pi i
    \qquad \text{if } P_2(x) \neq P_k^{(1)}, \\
    \label{jumpu3c}
    u_{1,+}^{(2)}(x) & \equiv u_{1,-}^{(2)}(x) - 2\pi i n \alpha_k + \pi i
    \qquad \text{if } P_1(x) \neq P_k^{(2)}, \\
    \label{jumpu3d}
    u_{2,+}^{(2)}(x) & \equiv u_{2,-}^{(2)}(x) + 2 \pi i n \alpha_k,
    \qquad \text{if } P_2(x) \neq P_k^{(2)},
\end{align}
The equalities \eqref{jumpu3a}--\eqref{jumpu3d} are valid modulo $2\pi i \, \mathbb Z$.
\end{enumerate}
\end{lemma}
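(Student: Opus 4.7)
The plan is to write each jump $u_{j,+}^{(\nu)}(x)-u_{j,-}^{(\nu)}(x)$ as an integral $\int_C\omega_P^{(\nu)}$ along a closed contour $C$ on $\mathcal R$ obtained by concatenating the defining path for the $+$ boundary value with the reverse of the defining path for the $-$ boundary value, joined by an infinitesimal arc at the common endpoint; under the hypothesis $P_j(x)\neq P_k^{(\nu)}$ in part (c) the endpoint is a regular point of $\omega_P^{(\nu)}$, so this arc contributes nothing in the limit. The integral is then evaluated by applying the residue theorem on $\mathcal R$, using the identification of the homology class of $C$ in the basis $A_1,\dots,A_{N-1},B_1,\dots,B_{N-1}$ together with the vanishing $A$-periods \eqref{eq:Aperiods}, the $B$-periods \eqref{eq:Bperiods}, and the residues \eqref{eq:res1}--\eqref{eq:res3}.

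Part (a) is essentially formal. For $x\in(a_k,b_k)$ the standard crosswise gluing of the hyperelliptic Riemann surface identifies $x+i0^+$ on sheet 1 with $x-i0^+$ on sheet 2 (and vice versa). Taking the cut through which the path of $u_{2,-}^{(\nu)}(x)$ crosses to be precisely $[a_k,b_k]$, that path becomes homotopic rel.\ endpoints, on $\mathcal R$ minus the poles of $\omega_P^{(\nu)}$, to the path for $u_{1,+}^{(\nu)}(x)$; the integrals therefore agree and \eqref{jumpu1a} holds with equality. The identity \eqref{jumpu1b} follows from the dual gluing.

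For part (b), when $j=\nu$ the contour $C$ lies entirely on sheet $\nu$ and does not cross any cut. On $\mathcal R$ it separates a cut-free region on sheet $\nu$ (containing no poles of $\omega_P^{(\nu)}$) from the complementary region, which contains every branch point $a_k,b_k$, every pole $P_j$, and the point $\infty_{3-\nu}$. By \eqref{eq:res1}--\eqref{eq:res3} the residues there sum to $2N\cdot(-\tfrac12)+(N-1)\cdot 1+1=0$, giving $\int_C\omega_P^{(\nu)}=0$ and hence \eqref{jumpu2a}. When $j\neq\nu$ the contour $C$ crosses the chosen cut twice (in opposite directions), and the two components of $\mathcal R\setminus C$ are imbalanced: the small component wraps around exactly one branch point, the one nearer to $x$ on the cut used (e.g.\ $b_N$ when $x>b_N$), so $\int_C\omega_P^{(\nu)}=2\pi i\cdot(-\tfrac12)\equiv\pi i\pmod{2\pi i}$, proving \eqref{jumpu2b}.

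Part (c) is similar but with one additional ingredient. For $x\in(b_k,a_{k+1})$ the contour $C$ now winds once around the cycle $B_k$ that encircles $[a_1,b_k]$ on the first sheet; its homology class in $H_1(\mathcal R)$ is $\pm B_k$ (when $j=\nu$) or $\pm B_k$ plus the branch-point loop of part (b) (when $j\neq\nu$). The sign is determined by whether $z$ is tracked on the first or the second sheet in Definition~\ref{def:abel} and by the orientation convention for $B_k$ in Figure~\ref{fig:canonicalhomology}. By \eqref{eq:Bperiods} the $\pm B_k$-term contributes $\pm 2\pi i n\alpha_k\pmod{2\pi i}$, and the branch-point loop contributes the additional $\pi i$ exactly when $j\neq\nu$, producing \eqref{jumpu3a}--\eqref{jumpu3d}; the hypotheses $P_j(x)\neq P_k^{(\nu)}$ ensure that $C$ can be formed and deformed to its standard form without sliding across the simple pole $P_k^{(\nu)}$, whose residue $1$ would in any case add only a multiple of $2\pi i$. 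The main obstacle is the honest bookkeeping of signs for the four pairs $(j,\nu)\in\{1,2\}^2$: which branch point is enclosed, on which side of $B_k$ the contour lies, and whether the $B_k$-period enters with a $+$ or a $-$. These are pinned down by tracing a representative path using the sheet-switching rule of Definition~\ref{def:abel} and the orientations of Figure~\ref{fig:canonicalhomology}, and they match the signs appearing in \eqref{jumpu3a}--\eqref{jumpu3d}.
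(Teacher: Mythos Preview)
Your approach is essentially the paper's: write each jump as $\oint_C \omega_P^{(\nu)}$ over the closed contour on $\mathcal R$ obtained by concatenating the $+$ and reversed $-$ paths, then evaluate by contracting $C$ or identifying it with $\pm B_k$ plus small loops around poles, using \eqref{eq:Aperiods}, \eqref{eq:Bperiods}, and \eqref{eq:res1}--\eqref{eq:res3}. Parts (a) and (b) match the paper's argument almost verbatim.

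The only substantive difference is in part~(c), where the paper carries out the four cases $(j,\nu)$ explicitly. In particular, for $j=\nu=2$ the paper records that the deformation of $C$ (which lies on the second sheet) to $B_k$ (on the first sheet) sweeps across the poles $a_1,\ldots,a_k,b_1,\ldots,b_k,P_1^{(\nu)},\ldots,P_{k-1}^{(\nu)}$, and observes that their combined residue is an integer; this is what makes \eqref{jumpu3d} come out correctly mod $2\pi i$. Your sketch asserts that the homology class in $H_1(\mathcal R)$ is $\pm B_k$ when $j=\nu$ and only tracks the single pole $P_k^{(\nu)}$. Since $\omega_P^{(\nu)}$ is meromorphic, the integral is determined by the class in $H_1$ of the \emph{punctured} surface, not of $\mathcal R$, so the extra residues genuinely enter the computation; your phrase ``the main obstacle is the honest bookkeeping'' is exactly right, and this is the piece of bookkeeping that your outline leaves implicit but the paper spells out.
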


\begin{proof}
The properties  \eqref{jumpu1a} and \eqref{jumpu1b} follow
immediately from the definition of $u_j^{(\nu)}$.

Let $\Delta^{(\nu)}$ be the set of poles for $\omega_P^{(\nu)}$, that is,
\begin{equation*}
\Delta^{(\nu)}=\{a_1,b_1,\ldots,a_N,b_N,P_1^{(\nu)},\ldots,P_{N-1}^{(\nu)},
\infty_j\},\quad j\neq \nu.
\end{equation*}

For $x < a_1$ and $x > b_N$, we have
\[ u_{j,+}^{(\nu)}(x) - u_{j,-}^{(\nu)}(x) = \oint_{C} \omega_P^{(\nu)} \]
where $C$ is a closed contour on $\mathcal R \setminus\Delta^{(\nu)}$.
For $j=\nu$, the contour $C$ is contractible in
$\mathcal{R} \setminus \Delta^{(\nu)}$ and \eqref{jumpu2a} follows.
When $j\neq \nu$, we choose the contour to pass through the cut $[a_1,b_1]$
in case $x < a_1$, and through the cut $[a_N,b_N]$ in case $x > b_N$.
Then $C$ is contractible to a small loop around $a_1$ (in case $x < a_1$)
or around $b_N$ (in case $x > b_N$). Since the residues of $\omega_P^{(\nu)}$ at
$a_1$ and $b_N$ are $-\frac{1}{2}$ we have in either case
\[ \oint_{C} \omega_P^{(\nu)} = \pi i \qquad \mod 2 \pi i, \]
and  \eqref{jumpu2b} follows.

Let  $x \in (b_k, a_{k+1})$, $x \neq z(P_k^{(\nu)})$, for some $k=1,
\ldots, N-1$. Then we have
\[ u_{j,+}^{(\nu)}(x) - u_{j,-}^{(\nu)}(x) = \oint_{C} \omega_P^{(\nu)} \]
where $C$ is again a closed contour on $\mathcal R \setminus \Delta^{(\nu)}$.
When $j = \nu = 1$, then $C$ is on the first sheet and can be
deformed into $-B_k$ and \eqref{jumpu3a} follows because of \eqref{eq:Bperiods}.

When $j= \nu = 2$, then $C$ is a closed contour on the second sheet,
and it  is homotopic to $B_k$ in $\mathcal R$, but the
deformation will pick up residue contributions from the poles at
$a_1,\ldots,a_k, b_1,\ldots,b_k$,
$P_1^{(\nu)},\ldots,P_{k-1}^{(\nu)}$ and possibly $P_k^{(\nu)}$
depending on its position. Since the combined residues of
$\omega_P^{(\nu)}$ at these poles is an integer number, the poles do
not contribute (modulo $2\pi i$) and we obtain \eqref{jumpu3d} from
\eqref{eq:Bperiods}.

When $j\neq \nu$, the closed loop $C$ is on both sheets, For $j=2$,
$\nu=1$, we choose $C$ so that it passes through the cut
$[a_k,b_k]$. Then $C$ is homotopic to $B_k$ in $\mathcal R$, but a
deformation from $C$ to $B_k$ will pick up a residue contribution
from $b_k$ and possibly from $P_k^{(\nu)}$. The combined residue is $-1/2$
or $+1/2$, and this leads to \eqref{jumpu3b}.

Finally, for $j=1$, $\nu=2$, we choose $C$ so that it passes through
the cut $[a_1,b_1]$. Then $C$ is homotopic to  $-B_k$ in $\mathcal
R$, and the deformation from $C$ to $-B_k$ picks up a residue
contribution at $a_1$ and possibly $P_k^{(\nu)}$. The combined residue is
$-1/2$ or $+1/2$, and we obtain \eqref{jumpu3c}.
\end{proof}

Note that in part (c) of Lemma \ref{lem:u1u2jumps} we excluded
the case $P_j(x) = P_k^{(\nu)}$ since $P_k^{(\nu)}$ is a pole of $\omega_P^{(\nu)}$
and so the limiting values $u_{j,+}^{(\nu)}(x)$ and $u_{j,-}^{(\nu)}(x)$ do not
exist if $P_k^{(\nu)}$ is on the $j$th sheet, see also part (b) of Lemma \ref{lem:u1u2behavior}.

The behavior near all poles of $\omega_P^{(\nu)}$ is stated in the following lemma.
We use $z(P)$ to denote the $z$-coordinate of a point $P=(z,w)$ on the Riemann
surface \eqref{eq:SurfaceEquation}.

\begin{lemma} \label{lem:u1u2behavior}
We have
\begin{enumerate}
\item[\rm (a)] for $j, \nu = 1,2$ and $k=1, \ldots, N$,
\begin{align*}
    u_j^{(\nu)}(z) & = -\tfrac{1}{4} \log(z-a_k) + O(1), \qquad \text{as } z \to a_k, \\
    u_j^{(\nu)}(z) & = -\tfrac{1}{4} \log(z-b_k) + O(1), \qquad \text{as } z \to b_k,
\end{align*}
\item[\rm (b)] if $P_k^{(\nu)}$ is on the $j$th sheet of the Riemann surface, then
\begin{align} \label{eq:ujatPknu}
    u_j^{(\nu)}(z) = \log(z- z(P_k^{(\nu)}))  + O(1) \qquad \text{as } z \to z(P_k^{(\nu)}),
    \end{align} \item[\rm (c)] as $z \to \infty$
\begin{align*}
    u_j^{(\nu)}(z) & = O(1/z),  \qquad \text{if } j = \nu \\
    u_j^{(\nu)}(z) & = - \log z + O(1), \quad \text{if } j \neq \nu.
    \end{align*}
\end{enumerate}
\end{lemma}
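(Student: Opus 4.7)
The plan is to analyze the three types of poles of $\omega_P^{(\nu)}$ separately, exploiting the fact that near a simple pole $Q$ of a meromorphic differential with residue $r$, in a local uniformizing parameter $t$ vanishing at $Q$, we have $\omega = \left(\tfrac{r}{t} + h(t)\right) dt$ with $h$ holomorphic, hence $\int \omega = r \log t + O(1)$ as we approach $Q$. Throughout, the path in the definition of $u_j^{(\nu)}$ lies on sheet $j$ and avoids the real axis, so I will implicitly deform the path so that it enters a small neighborhood of the target point from the appropriate half-plane, which only adds an $O(1)$ constant.

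For part (a), near a branch point $a_k$ (and similarly $b_k$), the correct local parameter on the Riemann surface is $t = (z-a_k)^{1/2}$. By Definition~\ref{def:1form} the residue of $\omega_P^{(\nu)}$ there is $-\tfrac12$, so
\[
 u_j^{(\nu)}(z) = -\tfrac12 \log t + O(1) = -\tfrac14 \log(z-a_k) + O(1),
\]
valid on either sheet since the two sheets are glued at $a_k$. For part (b), a point $P_k^{(\nu)} \in \Gamma_k$ either sits on sheet $1$ or sheet $2$ but is not a branch point, so the local parameter is simply $t = z - z(P_k^{(\nu)})$. Since the residue equals $1$, integration gives $u_j^{(\nu)}(z) = \log(z - z(P_k^{(\nu)})) + O(1)$ as $z \to z(P_k^{(\nu)})$ on the sheet containing $P_k^{(\nu)}$.

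For part (c), the local parameter at $\infty$ on either sheet is $t = 1/z$. When $j = \nu$, the base point $P_0 = \infty_\nu$ and $\omega_P^{(\nu)}$ is holomorphic at $\infty_\nu$. Expanding $\omega_P^{(\nu)} = h(t)\,dt$ with $h$ holomorphic and integrating from $t=0$ to $t = 1/z$ along a path on sheet $\nu$ gives $u_j^{(\nu)}(z) = h(0)/z + O(1/z^2) = O(1/z)$. When $j \neq \nu$, the pole at $\infty_j$ has residue $1$; in the local parameter $t = 1/z$ this translates into the expansion $\omega_P^{(\nu)} = \bigl(\tfrac{1}{t} + O(1)\bigr) dt$, so $\int \omega = \log t + O(1) = -\log z + O(1)$, yielding the claim.

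The only subtle point is making sure the $O(1)$ remainder is really bounded as $z$ tends non-tangentially to the pole; this is handled by fixing, for each endpoint, a small punctured disk (half-disk in $\mathbb{C}\setminus\mathbb{R}$) in which the holomorphic part of the local expansion is uniformly bounded, and by choosing the path of integration so that it enters this disk from a fixed direction. Keeping track of the precise branch of the logarithm is then a bookkeeping matter rather than an obstacle. The essential input — and the only nontrivial one — is the use of $\sqrt{z-a_k}$ (respectively $\sqrt{z-b_k}$) as the local parameter at the branch points, which is responsible for the factor $\tfrac14$ rather than $\tfrac12$ in part (a).
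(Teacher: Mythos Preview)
Your proof is correct and follows the same approach as the paper: both derive the asymptotics directly from the definition of $u_j^{(\nu)}$ as an Abelian integral together with the residue conditions \eqref{eq:res1}--\eqref{eq:res3}. The paper's proof is a one-line sketch, whereas you have written out the local-coordinate computations explicitly (in particular the use of $t=(z-a_k)^{1/2}$ at the branch points, which accounts for the $\tfrac14$), but there is no substantive difference in method.
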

\begin{proof}
The fact that $u_j^{(\nu)}(z) = O(1/z)$ as $z \to \infty$ in case $j = \nu$ follows directly
from \eqref{def:u1z} since $P_0 = \infty_{\nu}$. The other
statements of the lemma follow from \eqref{def:u1z}--\eqref{def:u2z}
and the residue conditions in \eqref{eq:res1}--\eqref{eq:res3}.
\end{proof}

In Lemma \ref{lem:u1u2behavior} we implicitly assumed that the point
$P_k^{(\nu)}$ is different from $b_k$ and $a_{k+1}$.
If for example, $P_k^{(\nu)} = b_k$, then the residue
of $\omega_P^{(\nu)}$ at $b_k$ is equal to $+\tfrac{1}{2}$, and in part (a)
of Lemma \ref{lem:u1u2behavior} we get
\[ u_j^{(\nu)}(z) = \tfrac{1}{4} \log(z-b_k) + O(1),
    \qquad \text{as } z \to b_k. \]
The modifications that are needed when one or more of the
$P_k^{(\nu)}$ coincide with an endpoint are obvious,
and we will not specify them in the rest of the paper.

\subsection{Exponential of the Abelian integrals}

Now we exponentiate the functions $u_j^{(\nu)}$.
\begin{definition} \label{def:v1v2}
We define $v_j^{(\nu)}$, $j,\nu=1,2$ by
\begin{equation} \label{eq:v1v2}
    v_j^{(\nu)}(z) = \exp \left( u_j^{(\nu)}(z) \right), \qquad z \in \mathbb C \setminus \mathbb R.
    \end{equation}
    \end{definition}

Then the functions $v_j^{(\nu)}(z)$ are analytic in $\mathbb C \setminus
\mathbb R$ with the following jumps on $\mathbb R$.
\begin{corollary} \label{cor:v1v2jumps}
The vectors $(v_1^{(\nu)},v_2^{(\nu)})$ satisfy the following jump
conditions on $\mathbb{R} \setminus \{a_1,b_1, \ldots, a_N, b_N\}$,
\begin{equation}\label{eq:vjump}
    (v_1^{(\nu)},v_2^{(\nu)})_+ = (v_1^{(\nu)},v_2^{(\nu)})_- J_v^{(\nu)}, \qquad \nu =1,2,
\end{equation}
where
\begin{align} \label{eq:Jvnu1}
    J_v^{(\nu)}(x) & = \begin{pmatrix} 0 & 1 \\ 1 & 0 \end{pmatrix} \qquad \text{for } a_k <  x < b_k,
        \quad k = 1, \ldots, N, \\
        \label{eq:Jvnu2}
    J_v^{(\nu)}(x) & = (-1)^{\nu-1} \begin{pmatrix} 1 & 0 \\ 0 & -1 \end{pmatrix} \qquad
    \text{for } x < a_1 \text{ or } x > b_N, \\
        \label{eq:Jvnu3}
    J_v^{(\nu)}(x) & =  (-1)^{\nu-1} \begin{pmatrix} e^{-2\pi i n \alpha_k} & 0 \\
        0 & - e^{2\pi in \alpha_k} \end{pmatrix} \qquad
    \text{for } b_k <  x < a_{k+1},
\end{align}
for $k=1, \ldots, N-1$.
\end{corollary}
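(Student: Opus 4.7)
The proof is essentially a routine translation of Lemma~\ref{lem:u1u2jumps} under the exponential map $v_j^{(\nu)} = \exp(u_j^{(\nu)})$. The plan is to handle the three regimes $x \in (a_k,b_k)$, $x < a_1$ or $x > b_N$, and $x \in (b_k, a_{k+1})$ separately, in each case reading off the corresponding jump matrix $J_v^{(\nu)}$ column by column. A pleasant feature of working with $v_j^{(\nu)}$ rather than $u_j^{(\nu)}$ is that the $\mathrm{mod}\,2\pi i\mathbb{Z}$ ambiguity in the jumps of the Abelian integrals disappears upon exponentiation, so the relations become genuine equalities of analytic functions.

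First I would treat the cut intervals $x \in (a_k, b_k)$. By \eqref{jumpu1a}--\eqref{jumpu1b} we get $v_{1,+}^{(\nu)} = v_{2,-}^{(\nu)}$ and $v_{2,+}^{(\nu)} = v_{1,-}^{(\nu)}$, which is precisely the row-vector identity $(v_1^{(\nu)}, v_2^{(\nu)})_+ = (v_1^{(\nu)}, v_2^{(\nu)})_- \bigl(\begin{smallmatrix} 0 & 1 \\ 1 & 0 \end{smallmatrix}\bigr)$, giving \eqref{eq:Jvnu1}.

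Next, for $x < a_1$ or $x > b_N$, apply \eqref{jumpu2a}--\eqref{jumpu2b}. When $j = \nu$ the function is continuous across the gap, so $v_{j,+}^{(\nu)} = v_{j,-}^{(\nu)}$. When $j \neq \nu$ the jump is by $\pi i$ modulo $2\pi i \mathbb Z$, so exponentiation gives $v_{j,+}^{(\nu)} = -v_{j,-}^{(\nu)}$. For $\nu=1$ this means only the second component flips sign (yielding $\mathrm{diag}(1,-1)$), while for $\nu=2$ only the first component flips sign (yielding $\mathrm{diag}(-1,1)$); the two cases are combined into \eqref{eq:Jvnu2} via the factor $(-1)^{\nu-1}$.

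Finally, on the gaps $x \in (b_k, a_{k+1})$ I would apply \eqref{jumpu3a}--\eqref{jumpu3d}. Exponentiating, the additive shift $-2\pi i n\alpha_k$ on the $\nu$th sheet produces the multiplicative factor $e^{-2\pi i n\alpha_k}$ on $v_\nu^{(\nu)}$, while on the other sheet the shift $+2\pi i n\alpha_k + \pi i$ produces $-e^{2\pi i n\alpha_k}$. For $\nu=1$ this directly gives the diagonal matrix in \eqref{eq:Jvnu3} with $(-1)^{\nu-1}=1$, and for $\nu=2$ the roles of the two sheets swap so the overall jump acquires a global minus sign, reproducing the factor $(-1)^{\nu-1}=-1$. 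The edge case where $x$ coincides with the projection of some $P_k^{(\nu)}$ onto the real line is an isolated point; since $v_j^{(\nu)}$ has only an isolated singularity there (a simple zero or pole), the jump relation extends there by continuity, so excluding it from the statement is harmless.

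There is no real obstacle here; the only thing to watch carefully is the bookkeeping of signs coming from the $\pi i$ shifts in \eqref{jumpu2b}, \eqref{jumpu3b} and \eqref{jumpu3c}, together with the orientation convention $(v_1,v_2)_+ = (v_1, v_2)_- J_v^{(\nu)}$ so that the jump matrix acts on the right.
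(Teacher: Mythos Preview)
Your approach is essentially the paper's: exponentiate the jump relations of Lemma~\ref{lem:u1u2jumps} in each of the three regimes, then treat the exceptional point $x = z(P_k^{(\nu)})$ separately. The only minor difference is in that last step: the paper observes directly from Lemma~\ref{lem:u1u2behavior}(b) that $v_{j,+}^{(\nu)}(x) = v_{j,-}^{(\nu)}(x) = 0$ there, so \eqref{eq:Jvnu3} holds trivially as $0=0$; your continuity argument is also valid, but note that since $\Res(\omega_P^{(\nu)},P_k^{(\nu)})=+1$ only a simple \emph{zero} of $v_j^{(\nu)}$ can occur, never a pole --- and the statement does not in fact exclude this point.
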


\begin{proof} The jumps follow directly from Lemma \ref{lem:u1u2jumps}
and Definition \ref{def:v1v2} in case $x \neq z(P_k^{(\nu)})$.

If $x = z(P_k^{(\nu)})$ and $P_k^{(\nu)}$ is on the $j$th sheet,
then Lemma \ref{lem:u1u2jumps} does not apply to $u_j$.
However, in that case we find by part (b) of Lemma \ref{lem:u1u2behavior}
and Definition \ref{def:v1v2} that
\[ v_{j,+}^{(\nu)}(x) = v_{j,-}^{(\nu)}(x) = 0, \]
and \eqref{eq:vjump}, \eqref{eq:Jvnu3} is also valid.

\end{proof}

From Lemma \ref{lem:u1u2behavior} and
\eqref{eq:v1v2} we find the following behavior near the poles of
$\omega_P^{(\nu)}$.
\begin{corollary} \label{cor:v1v2behavior}
We have
\begin{enumerate}
\item[\rm (a)] for $j, \nu = 1,2$ and $k=1, \ldots, N$,
\begin{align*}
    v_j^{(\nu)}(z) & = O\left((z-a_k)^{-1/4}\right), \qquad  \text{as } z \to a_k, \\
    v_j^{(\nu)}(z) & = O\left((z-b_k)^{-1/4}\right), \qquad \text{as } z \to b_k,
\end{align*}
\item[\rm (b)]  if $P_k^{(\nu)}$ is on the $j$th sheet of $\mathcal R$, then
\begin{equation} \label{eq:vjatPk}
    v_j^{(\nu)}(z) = O (z- z(P_k^{(\nu)}))  \qquad \text{as } z \to z(P_k^{(\nu)}),
    \end{equation}
\item[\rm (c)] as $z \to \infty$
\begin{align*}
    v_j^{(\nu)}(z) & = 1 + O(1/z), \quad \text{if } j = \nu \\
    v_j^{(\nu)}(z) & = O(1/z), \qquad \text{if } j \neq \nu.
    \end{align*}
\end{enumerate}
\end{corollary}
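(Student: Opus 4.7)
The proof is essentially a direct exponentiation of Lemma \ref{lem:u1u2behavior}, since by Definition \ref{def:v1v2} we have $v_j^{(\nu)} = \exp(u_j^{(\nu)})$. My plan is to treat each of the three parts in turn, using the elementary fact that if $f(z) = c\log(z-z_0) + g(z)$ with $g$ bounded near $z_0$, then $\exp(f(z)) = e^{g(z)}(z-z_0)^c$, which is $O((z-z_0)^c)$ if $\repart c \geq 0$ (or, more generally, if the bound is interpreted as one-sided for negative real exponents).

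For part (a), I would start with the expansion $u_j^{(\nu)}(z) = -\tfrac{1}{4}\log(z-a_k) + O(1)$ from Lemma \ref{lem:u1u2behavior}(a) and exponentiate to obtain $v_j^{(\nu)}(z) = O((z-a_k)^{-1/4})$. The identical argument with $b_k$ in place of $a_k$ gives the second estimate. For part (b), Lemma \ref{lem:u1u2behavior}(b) gives $u_j^{(\nu)}(z) = \log(z - z(P_k^{(\nu)})) + O(1)$ when $P_k^{(\nu)}$ lies on the $j$th sheet, and exponentiation immediately gives \eqref{eq:vjatPk}. For part (c), when $j = \nu$ we use $u_j^{(\nu)}(z) = O(1/z)$ as $z \to \infty$ together with the Taylor expansion of the exponential to conclude $v_j^{(\nu)}(z) = 1 + O(1/z)$; when $j \neq \nu$ we use $u_j^{(\nu)}(z) = -\log z + O(1)$ so that $v_j^{(\nu)}(z) = O(1/z)$.

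There is essentially no obstacle: the behavior statements in Lemma \ref{lem:u1u2behavior} were tailored so that each one exponentiates cleanly. The only minor point worth mentioning is the remark following Lemma \ref{lem:u1u2behavior} concerning the case that some $P_k^{(\nu)}$ coincides with an endpoint $a_k$ or $b_k$. In that situation the logarithmic coefficient in part (a) changes sign from $-\tfrac{1}{4}$ to $+\tfrac{1}{4}$ (due to the modified residue) and in part (b) the statement is vacuous for that particular $P_k^{(\nu)}$; either way the bound $O((z-a_k)^{-1/4})$, respectively $O((z-b_k)^{-1/4})$, still holds as stated. I would simply note that this case is covered by the same exponentiation argument and is implicit in the convention adopted after Lemma \ref{lem:u1u2behavior}.
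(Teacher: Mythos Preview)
Your proposal is correct and follows exactly the paper's approach: the paper simply states that the corollary follows from Lemma~\ref{lem:u1u2behavior} and the definition \eqref{eq:v1v2}, and you have spelled out the straightforward exponentiation that this entails. Your treatment of the degenerate case where $P_k^{(\nu)}$ coincides with an endpoint is also in line with the convention the paper adopts after Lemma~\ref{lem:u1u2behavior}.
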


Note that by \eqref{eq:vjatPk} the function $v_j^{(\nu)}$,
has a zero at $z(P_k^{\nu})$ in case $P_k^{(\nu)}$ is on the $j$th sheet.

\subsection{The parametrix $M$}
To construct the solution $M$ of the model Riemann-Hilbert problem
in Definition \ref{def:model}, we only need a trivial modification
of the functions $v_j^{(\nu)}$.
\begin{definition} \label{def:M}
We define
\begin{equation} \label{eq:defM}
    M(z) = \left\{ \begin{aligned}
    \begin{pmatrix} v_1^{(1)}(z) & v_2^{(1)}(z)   \\
      -v_1^{(2)}(z)  &  v_2^{(2)}(z) \end{pmatrix}, \qquad \text{if } \impart z > 0, \\
    \begin{pmatrix} v_1^{(1)}(z) & -v_2^{(1)}(z)   \\
      v_1^{(2)}(z) &  v_2^{(2)}(z) \end{pmatrix}, \qquad \text{if } \impart z < 0.
        \end{aligned} \right.
        \end{equation}
\end{definition}

From \eqref{eq:vjump}--\eqref{eq:Jvnu3}, it is easy to verify that
$M(z)$ does indeed satisfy the jump conditions
\eqref{eq:JM1}--\eqref{eq:JM2} of the RH problem in Definition
\ref{def:model}. The asymptotic condition $M(z) = I + O(1/z)$ as
$z \to \infty$ holds because of part (c) of Corollary
\ref{cor:v1v2behavior}. Part (a) of Corollary
\ref{cor:v1v2behavior} shows that $M$ has at most fourth root
singularities at the endpoints $a_k$ and $b_k$. So we have proved
the main result of this paper:
\begin{theorem} \label{thm:Msolution}
The matrix-valued function $M(z)$ defined by \eqref{eq:defM}
satisfies the model Riemann-Hilbert problem of Definition
\ref{def:model}.
\end{theorem}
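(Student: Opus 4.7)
The plan is to verify each of the four conditions of Definition \ref{def:model} by direct computation from Definition \ref{def:M}, using Corollary \ref{cor:v1v2jumps} for the jumps and Corollary \ref{cor:v1v2behavior} for the local and asymptotic behavior. The key observation is that the sign swaps in the two branches of \eqref{eq:defM} (the $-v_1^{(2)}$ in the upper half plane and the $-v_2^{(1)}$ in the lower half plane) are placed exactly to reconcile the diagonal jumps $(-1)^{\nu-1}\operatorname{diag}(1,-1)$ of $v^{(\nu)}$ along $(-\infty,a_1)\cup(b_N,\infty)$ with the required analyticity of $M$ outside $[a_1,b_N]$, while simultaneously turning the off-diagonal swap on $(a_k,b_k)$ into the jump $\bigl(\begin{smallmatrix}0 & 1 \\ -1 & 0\end{smallmatrix}\bigr)$.

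First I would check analyticity across the outer real axis. For $x<a_1$ or $x>b_N$, the relations $v_{1,+}^{(1)}=v_{1,-}^{(1)}$, $v_{2,+}^{(1)}=-v_{2,-}^{(1)}$, $v_{1,+}^{(2)}=-v_{1,-}^{(2)}$, $v_{2,+}^{(2)}=v_{2,-}^{(2)}$ read off from \eqref{eq:Jvnu2} combine with the $\pm$ signs in \eqref{eq:defM} so that the upper-half-plane expression, evaluated from above, agrees with the lower-half-plane expression, evaluated from below. Hence $M$ extends analytically across $(-\infty,a_1)\cup(b_N,\infty)$, and together with analyticity off $\mathbb{R}$ (inherited from $v_j^{(\nu)}$) this gives condition (a).

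Next I would verify the jump condition (b). On $(a_k,b_k)$ the jump \eqref{eq:Jvnu1} yields $v_{1,\pm}^{(\nu)} = v_{2,\mp}^{(\nu)}$, so that computing $M_-J_M$ with $J_M = \bigl(\begin{smallmatrix}0 & 1 \\ -1 & 0\end{smallmatrix}\bigr)$ interchanges columns (with a sign) and reproduces $M_+$ column by column; this is a short $2\times 2$ multiplication using the lower-half-plane formula for $M_-$ and the upper-half-plane formula for $M_+$. On $(b_k,a_{k+1})$ the factor $(-1)^{\nu-1}$ in \eqref{eq:Jvnu3} is absorbed by the sign swap between the upper and lower branches of \eqref{eq:defM}, leaving precisely the diagonal jump $\operatorname{diag}(e^{-2\pi i n\alpha_k},e^{2\pi in\alpha_k})$ of \eqref{eq:JM2}. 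Conditions (c) and (d) are then immediate from parts (c) and (a) of Corollary \ref{cor:v1v2behavior}: the diagonal entries $v_\nu^{(\nu)}$ are $1+O(1/z)$ and the off-diagonal entries are $O(1/z)$ at infinity, while all four entries have at most $(z-a_k)^{-1/4}$, $(z-b_k)^{-1/4}$ singularities at the endpoints.

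I do not anticipate any real obstacle, since the heavy lifting has been done in Lemmas \ref{lem:u1u2jumps}--\ref{lem:u1u2behavior} and their corollaries, as well as in Theorem \ref{thm:Psibijection}, which guarantees the existence of the divisor matching the prescribed $B$-periods $2\pi i n\alpha_k$. The only point that requires a moment's care is bookkeeping of the $(-1)^{\nu-1}$ signs on the gaps together with the sign swap between the two branches in \eqref{eq:defM}; these are exactly the signs encoded by the extra factors $-v_1^{(2)}$ and $-v_2^{(1)}$ in Definition \ref{def:M}, and they are what converts the four jumps of Corollary \ref{cor:v1v2jumps} into the two jumps of Definition \ref{def:model}. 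A final remark would note that the would-be poles of $M$ at the points $z(P_k^{(\nu)})$ do not arise, because by \eqref{eq:vjatPk} the relevant entries vanish there, so $M$ is genuinely analytic on $\mathbb{C}\setminus[a_1,b_N]$.
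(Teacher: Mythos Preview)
Your proposal is correct and follows exactly the same approach as the paper, which simply cites Corollaries \ref{cor:v1v2jumps} and \ref{cor:v1v2behavior} and declares the verification of conditions (a)--(d) routine; you have merely spelled out explicitly the sign bookkeeping that the paper leaves to the reader. Your final remark about the points $z(P_k^{(\nu)})$ is also consistent with the paper (cf.\ the proof of Corollary \ref{cor:v1v2jumps}), though note these are zeros of $v_j^{(\nu)}$ rather than potential poles, so analyticity of $M$ was never threatened there.
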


\section{Properties of $M$} \label{section4}
We collect here some further properties of $M$ that are useful
in applications.

\subsection{Uniqueness of the solution}
The first two properties are standard, see e.g. \cite{Deift}
\begin{proposition} \label{prop:detM}
For every $z \in \mathbb C \setminus [a_1,b_N]$ we have
\[ \det M(z) = 1. \]
\end{proposition}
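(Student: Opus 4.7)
The plan is the standard argument: show that $\det M$ extends to an entire function bounded at infinity, then apply Liouville.

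First I would check that $\det J_M(x) = 1$ for every $x \in [a_1,b_N]$. This is immediate from the explicit form of the jumps: the off-diagonal matrix in \eqref{eq:JM1} has determinant $-(−1)(1) = 1$, and the diagonal matrix in \eqref{eq:JM2} has determinant $e^{-2\pi i n \alpha_k} \cdot e^{2\pi i n \alpha_k} = 1$. Taking determinants of the identity $M_+(x) = M_-(x) J_M(x)$ then yields $\det M_+(x) = \det M_-(x)$ for every $x$ in the jump contour where both boundary values exist.

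Next I would argue that $\det M$, which is a priori analytic on $\mathbb{C} \setminus [a_1,b_N]$, extends analytically across each arc of the jump contour by the previous identity (via Morera or a standard edge-of-the-wedge argument), so $\det M$ is analytic on $\mathbb{C} \setminus \{a_1,b_1,\ldots,a_N,b_N\}$. At each endpoint $a_k$ or $b_k$, condition (d) of Definition \ref{def:model} tells us that each entry of $M$ grows at most like $|z-a_k|^{-1/4}$ (respectively $|z-b_k|^{-1/4}$); hence $\det M(z) = O\bigl(|z-a_k|^{-1/2}\bigr)$ as $z \to a_k$, and similarly at $b_k$. An isolated singularity of a scalar analytic function with such growth is removable, so $\det M$ extends to an entire function.

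Finally, the normalization $M(z) = I + O(1/z)$ as $z \to \infty$ gives $\det M(z) = 1 + O(1/z)$, so the entire function $\det M - 1$ is bounded and vanishes at infinity. Liouville's theorem then forces $\det M \equiv 1$ on $\mathbb{C} \setminus [a_1,b_N]$.

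There is no real obstacle here; the only point that requires a moment's thought is the endpoint analysis, but the fourth-root bound is well below the $|z-a_k|^{-1}$ threshold for removability, so it goes through without incident.
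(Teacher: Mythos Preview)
Your proof is correct and follows essentially the same approach as the paper's: both observe that $\det J_M \equiv 1$ so $\det M$ extends analytically across the contour, that the fourth-root endpoint bounds give at most square-root growth for $\det M$ (hence removable singularities), and then conclude via Liouville from $\det M(z) = 1 + O(1/z)$. The only difference is that you spell out a few details (the explicit computation of $\det J_M$, the mention of Morera) that the paper leaves implicit.
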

\begin{proof}
Since the jump matrices in the model Riemann-Hilbert have
determinant $1$, the function $z \mapsto \det M(z)$ has no jump
discontinuities in $\mathbb{C}$, and so it has an analytic
extension to $\mathbb C \setminus \{a_1, b_1, \ldots, a_N, b_N\}$.
From part (d) in Definition \ref{def:model}, we conclude that
$\det M(z)$ can have at most square-root singularities at the
endpoint $a_j, b_j$, and therefore these isolated singularties are
removable. Thus $z \mapsto \det M(z)$ is an entire function, which
by the asymptotic condition satisfies $\det M(z) = 1 + O(z^{-1})$
as $z \to \infty$. Then the proposition follows, by Liouville's
theorem.
\end{proof}

\begin{proposition} \label{prop:Munique}
The solution $M$ of the model RH problem is unique.
\end{proposition}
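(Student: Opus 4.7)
The plan is to use the standard argument that reduces uniqueness to Liouville's theorem, exploiting Proposition~\ref{prop:detM} that has just been established. Suppose $\widetilde M$ is another solution of the model RH problem. Since $\det M \equiv 1$ by Proposition~\ref{prop:detM}, the matrix $M(z)$ is invertible at every point where it is analytic, and $M^{-1}$ is itself a meromorphic (in fact analytic on $\mathbb{C} \setminus [a_1,b_N]$) matrix-valued function. I would then form the ratio
\begin{equation*}
    R(z) = \widetilde M(z) M(z)^{-1}, \qquad z \in \mathbb C \setminus [a_1, b_N],
\end{equation*}
and show in order: (i) $R$ has no jump across $[a_1,b_N]$, (ii) the endpoints $a_k, b_k$ are removable singularities, (iii) $R(z) \to I$ at infinity, so $R \equiv I$.

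For step (i), on any subinterval of $[a_1,b_N]$ where the jump matrix $J_M$ is defined, both $\widetilde M_+ = \widetilde M_- J_M$ and $M_+ = M_- J_M$ hold, whence
\begin{equation*}
    R_+(x) = \widetilde M_-(x) J_M(x) J_M(x)^{-1} M_-(x)^{-1} = R_-(x).
\end{equation*}
Thus $R$ extends analytically across $(a_k,b_k)$ and $(b_k,a_{k+1})$, leaving only the finitely many endpoints $a_k, b_k$ as possible singularities. For step (iii), condition (c) of Definition~\ref{def:model} applied to both $\widetilde M$ and $M$ gives $\widetilde M(z) = I + O(1/z)$ and $M(z)^{-1} = I + O(1/z)$, hence $R(z) = I + O(1/z)$ as $z \to \infty$.

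The only step needing a little care is (ii): the removability of the endpoint singularities. Here the key point is that $\det M \equiv 1$ together with condition (d) forces $M^{-1}$ to also have at most fourth-root singularities at each $a_k$ and $b_k$, since the entries of $M^{-1}$ are entries of the cofactor matrix of $M$ (no division by a small determinant). Consequently the entries of $R = \widetilde M M^{-1}$ grow at most like $|z-a_k|^{-1/2}$ near each endpoint, which is strictly weaker than a simple pole and hence gives a removable isolated singularity for each entry. After these removals, $R$ is entire with $R(z) = I + O(1/z)$ at infinity, so Liouville's theorem forces $R \equiv I$, giving $\widetilde M = M$ and completing the proof. I do not expect any serious obstacle; the only subtlety is the explicit appeal to $\det M = 1$ to control the singularities of $M^{-1}$, which is precisely why Proposition~\ref{prop:detM} is proved first.
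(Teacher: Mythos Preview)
Your proposal is correct and follows essentially the same approach as the paper's own proof: form $\widetilde{M} M^{-1}$, use the common jump to get analyticity across the open intervals, use $\det M \equiv 1$ together with the fourth-root bounds to show the endpoint singularities are at worst square-root and hence removable, and conclude by Liouville. The only cosmetic difference is that you spell out the cofactor reason why $M^{-1}$ inherits the fourth-root bound, which the paper leaves implicit.
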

\begin{proof}
Let $\widetilde{M}$ be a second solution of the model RH problem.
By Proposition \ref{prop:detM}, we have that $M(z)$ is invertible
for every $z \in \mathbb C \setminus [a_1,b_N]$. Then
\[ H(z) = \widetilde{M}(z) M(z)^{-1}, \qquad z \in \mathbb C \setminus \mathbb [a_1,b_N] \]
is well-defined and analytic. Since $\widetilde{M}$ and $M$ satisfy the same jump
conditions, it follows that $H_+(x) = H_-(x)$ for every $x \in [a_1,b_N] \setminus
\{ a_1, b_1, \ldots, a_N,b_N \}$. The entries of $\widetilde{M}$ and $M^{-1}$
have at most fourth root singularities at the endpoints $a_j, b_j$. Thus $H$ has
at most square root singularities, and it follows that the singularities
are removable. Thus $H$ has an extension to an entire function.
Since $H(z) = I + O(1/z)$ as $z \to \infty$, we find by Liouville's theorem
that $H(z) = I$ for every $z \in \mathbb C$.
The proposition follows.
\end{proof}

\subsection{Zeros}

By \eqref{def:v1v2} and \eqref{eq:defM} the entries of $M(z)$ do not
vanish at any point $z \in \mathbb C \setminus [a_1,b_N]$. Across
each cut $(a_k,b_k)$, $k=1, \ldots, N$, and each gap $(b_k,
a_{k+1})$, $k=1, \ldots, N-1$ the entries have an analytic
continuation given by the RH problem. By \eqref{eq:vjatPk} we have
that $v_j^{(\nu)}(z) \to 0$ as $z \to z(P_k^{(\nu)})$ if
$P_k^{(\nu)}$ is on the $j$th sheet. By \eqref{eq:defM} this
translates into the following statement about the zeroes of the
entry $M_{\nu,j}$ of $M$.

\begin{proposition} \label{prop:Mzeros}
If $P_k^{(\nu)}$ is on the $j$th sheet then
$M_{\nu,j}(z)$ has a simple zero at $z = z(P_k^{(\nu)})$,
in the sense that the restriction of $M_{\nu,j}$
to the upper (lower) half-plane has an analytic
continuation across $(b_k,a_{k+1})$ into the lower (upper)
half-plane which has a simple zero at $z=z(P_k^{(\nu)})$.

The points $z=z(P_k^{(\nu)})$ are the only possible
zeros of the (analytic continuations of the) entries of $M$.
\end{proposition}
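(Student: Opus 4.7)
The plan is to isolate two mechanisms that could produce zeros of entries of $M$: vanishing of some $v_j^{(\nu)}$ in its own domain of analyticity $\mathbb C\setminus\mathbb R$, and vanishing introduced by analytic continuation of $M_{\nu,j}$ across the real axis. The first mechanism is immediately ruled out because $v_j^{(\nu)} = \exp u_j^{(\nu)}$ by Definition \ref{def:v1v2}, so every entry of $M$, being $\pm v_j^{(\nu)}$, is nowhere zero off the real line. Any zero of an analytic continuation must therefore be produced by crossing either a band $(a_k,b_k)$ or a gap $(b_k,a_{k+1})$.

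For the first statement I would fix $k$ and $\nu$ with $P_k^{(\nu)}$ on the $j$th sheet and work locally near $z(P_k^{(\nu)})$. Corollary \ref{cor:v1v2jumps} shows that the jump of $v_j^{(\nu)}$ across the gap $(b_k,a_{k+1})$ is multiplication by a nonzero scalar, so the upper half-plane restriction of $v_j^{(\nu)}$ continues analytically through $(b_k,a_{k+1})\setminus\{z(P_k^{(\nu)})\}$ into the lower half-plane. The local behavior from Lemma \ref{lem:u1u2behavior}(b), $u_j^{(\nu)}(z) = \log(z-z(P_k^{(\nu)})) + O(1)$, then exponentiates to a factorization of the form
\[ v_j^{(\nu)}(z) = (z-z(P_k^{(\nu)}))\, g(z) \]
with $g$ analytic and nonvanishing in a full neighborhood of $z(P_k^{(\nu)})$, which gives the required simple zero. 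Since $M_{\nu,j} = \pm v_j^{(\nu)}$ by Definition \ref{def:M}, the same simple zero transfers to $M_{\nu,j}$; the continuation in the opposite direction is identical.

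For the second statement I would rule out zeros produced by crossing a band $(a_k,b_k)$. The jump of $M$ across such a band, given by \eqref{eq:JM1}, sends $M_{\nu,1}$ on one side of the band to $\pm M_{\nu,2}$ on the other side; equivalently, via Corollary \ref{cor:v1v2jumps} the continuation of $v_j^{(\nu)}$ across a band agrees with $\pm v_{3-j}^{(\nu)}$ in the opposite half-plane, and the latter is a nonvanishing exponential. Thus no zero is produced by band crossings, and together with the gap analysis this identifies the full zero set of the analytic continuations of the entries of $M$ as exactly the finite collection $\{z(P_k^{(\nu)}) : k=1,\ldots,N-1,\ \nu=1,2\}$, each zero attached to one specific entry.

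The main conceptual step, and essentially the only substantive point, is the passage from the logarithmic singularity of the Abelian integral $u_j^{(\nu)}$ at the pole $P_k^{(\nu)}$ of $\omega_P^{(\nu)}$ to an honest analytic zero of $v_j^{(\nu)}$ after exponentiation. This requires $u_j^{(\nu)}$ to be analytic modulo its $\log$ singularity in a \emph{full} disk around $z(P_k^{(\nu)})$ straddling the gap, which hinges on the gap jump being a trivial multiplicative constant as supplied by Corollary \ref{cor:v1v2jumps}. Once this is in hand, the remainder of the argument is simply a comparison of the two jump types.
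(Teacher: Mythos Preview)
Your argument is correct and follows the same line as the paper's own proof. The paper relegates most of the work to the discussion preceding the proposition (entries are exponentials, hence nonvanishing off $[a_1,b_N]$; analytic continuation across cuts and gaps is given by the RH jump), and its formal proof consists of the single observation that the simplicity of the zero at $z(P_k^{(\nu)})$ follows from $M_{\nu,j}=\pm e^{u_j^{(\nu)}}$ together with the logarithmic behavior \eqref{eq:ujatPknu}. Your version spells out the band-versus-gap dichotomy more explicitly and makes the removable-singularity argument at $z(P_k^{(\nu)})$ precise, but the substance is identical.
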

\begin{proof}
Everything is already proved, except for the fact that the zero is
simple. This follows from the fact that $M_{\nu,j} = \pm
e^{u_j^{(\nu)}}$ where $u_j^{(\nu)}(z)$ has the behavior
\eqref{eq:ujatPknu} as $z \to z(P_k^{(\nu)})$.
\end{proof}

\subsection{Uniform boundedness}
The solution $M$ clearly depends on $n$. The following uniform
boundedness property
is needed in the construction of a local parametrix in the
steepest descent analysis, see \cite{DKMVZ}.

\begin{proposition} \label{prop:uniformbound}
For every $\varepsilon > 0$, we have that
$M(z)$ and $M^{-1}(z)$   are uniformly bounded in $n$
for $z$ in the set
\begin{equation} \label{eq:setepsilon}
     \{ z \in \mathbb C \setminus [a_1,b_N] \mid |z-a_j| \geq \varepsilon, |z-b_j| \geq \varepsilon
    \textrm{ for all } j=1, \ldots, N \}.
    \end{equation}
\end{proposition}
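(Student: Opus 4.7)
The key observation is that the only $n$-dependence of $M$ enters through the tuple $(P_1^{(\nu)}(n),\ldots,P_{N-1}^{(\nu)}(n))$ singled out by \eqref{eq:Psiimage}, and this tuple lies in the \emph{fixed} compact product of real ovals $\Gamma_1 \times \cdots \times \Gamma_{N-1}$ irrespective of $n$. The plan is therefore to view the entire construction of Section \ref{section3} as a map of two arguments---the divisor $\mathbf{P} \in \Gamma_1 \times \cdots \times \Gamma_{N-1}$ and the spectral variable $z$---prove joint continuity, and then invoke compactness.

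Concretely, I would define $M(z;\mathbf{P})$ for arbitrary $\mathbf{P} = (P_1,\ldots,P_{N-1}) \in \Gamma_1 \times \cdots \times \Gamma_{N-1}$ by running the construction of Section \ref{section3} verbatim: form $\omega_{\mathbf{P}}^{(\nu)}$, define the Abelian integrals $u_j^{(\nu)}(z)$, exponentiate to $v_j^{(\nu)}(z)$, and assemble via \eqref{eq:defM}. (The resulting $M(\cdot;\mathbf{P})$ solves a model RH problem whose gap jumps involve $\beta_k(\mathbf{P})$ from \eqref{eq:betak} rather than $n\alpha_k$; this is harmless for our bounds.) I split the set \eqref{eq:setepsilon} according to the sign of $\impart z$ and pass to closures in the Riemann sphere, obtaining two compact subsets $\overline{\Omega_\varepsilon^{\pm}}$ of $\overline{\mathbb{H}^{\pm}} \cup \{\infty\}$ on which the relevant branches of $M(\cdot;\mathbf{P})$ have well-defined boundary values, given explicitly by Lemma \ref{lem:u1u2jumps}. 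Joint continuity of
\[
(\mathbf{P},z) \longmapsto M(z;\mathbf{P})
\]
on $\Gamma_1 \times \cdots \times \Gamma_{N-1} \times \overline{\Omega_\varepsilon^{\pm}}$ then follows from (i) the continuous dependence of $\omega_{\mathbf{P}}^{(\nu)}$ on $\mathbf{P}$ already noted after Definition \ref{def:1form}, (ii) the fact that the paths defining $u_j^{(\nu)}(z)$ can be chosen inside the open upper (resp.\ lower) half-sheet, disjoint from the poles of $\omega_{\mathbf{P}}^{(\nu)}$, so that the integrals vary continuously in $(\mathbf{P},z)$ up to additive integer multiples of $2\pi i$, and (iii) these ambiguities are killed by exponentiation.

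Continuity on a compact domain yields a finite bound $C_\varepsilon$ on $\|M(z;\mathbf{P})\|$ uniformly in $(\mathbf{P},z)$. Specializing to the $n$-dependent divisor gives the asserted uniform-in-$n$ bound on $M(z)$. For $M(z)^{-1}$, Proposition \ref{prop:detM} ($\det M \equiv 1$) gives
\[
M(z)^{-1} = \begin{pmatrix} M_{22}(z) & -M_{12}(z) \\ -M_{21}(z) & M_{11}(z) \end{pmatrix},
\]
so the same bound $C_\varepsilon$ applies to $M(z)^{-1}$.

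The main obstacle is verifying joint continuity of $M(z;\mathbf{P})$ at two delicate configurations: first, when $P_k$ crosses a branch point $b_k$ or $a_{k+1}$, where two simple poles of $\omega_{\mathbf{P}}^{(\nu)}$ merge into a single pole with combined residue $\tfrac{1}{2}$; and second, when $z$ approaches $z(P_k)$, where $v_j^{(\nu)}$ develops a simple zero whose location moves with $\mathbf{P}$. Both reduce to the continuous dependence of $\omega_{\mathbf{P}}^{(\nu)}$ on $\mathbf{P}$, but they require a careful local analysis: near a branch point via the local uniformizer $\sqrt{z-b_k}$, and near a zero $z(P_k)$ via a factorization $v_j^{(\nu)}(z) = (z-z(P_k))\, g(z;\mathbf{P})$ with $g$ continuous and non-vanishing on the relevant compact region.
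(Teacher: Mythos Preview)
Your argument is correct and follows essentially the same compactness strategy as the paper's proof. Both proofs embed the $n$-dependent solution in a family parametrized by a fixed compact space, establish continuity of $M$ in this parameter, and conclude uniform boundedness; the treatment of $M^{-1}$ via $\det M \equiv 1$ is identical.

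The only notable difference is the choice of parameter space. The paper parametrizes by the $B$-period vector $\beta = (\beta_1,\ldots,\beta_{N-1}) \in (\mathbb R/\mathbb Z)^{N-1}$, viewing $M(z;\beta)$ as the unique solution of the model RH problem with jump data $e^{\pm 2\pi i \beta_k}$, and asserts continuity of $\beta \mapsto M(\cdot;\beta)$ in the compact-open topology without spelling out the verification. You instead parametrize directly by the divisor $\mathbf P \in \Gamma_1\times\cdots\times\Gamma_{N-1}$ and trace continuity back through the explicit construction $\omega_{\mathbf P}^{(\nu)} \to u_j^{(\nu)} \to v_j^{(\nu)} \to M$. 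Since $\Psi^{(\nu)}$ is a continuous bijection between compact Hausdorff spaces (Proposition~\ref{prop:Psicontinuous} and Theorem~\ref{thm:Psibijection}), hence a homeomorphism, the two parametrizations are equivalent. Your route has the advantage of making the continuity claim concrete and of correctly flagging the two delicate configurations (merging poles at branch points, and the moving zero of $v_j^{(\nu)}$ at $z(P_k)$); the paper's route is shorter but leaves these checks implicit.
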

\begin{proof}
It follows easily from our construction that for every $(\beta_1,
\ldots, \beta_{N-1}) \in (\mathbb R \slash \mathbb Z)^{N-1}$ there
is a unique solution to the model RH problem where the jump on
$(a_j, b_{j+1})$ is replaced by
\[ M_+(x) = M_-(x) \begin{pmatrix} e^{-2\pi i \beta_j} & 0 \\ 0 & e^{2\pi i \beta_j}
    \end{pmatrix}, \qquad x \in (a_j, b_{j+1}) \]
for $j=1, \ldots, N-1$. If we denote the solution of this RH problem
by
\begin{equation} \label{eq:Mwithbetas}
    M(z; \beta_1, \ldots, \beta_{N-1})
    \end{equation}
then the solutions we are interested in are
\[ M(z; n \alpha_1, \ldots, n \alpha_{N-1}), \qquad n \in \mathbb N, \]
and so they are part of this family \eqref{eq:Mwithbetas}. It is therefore
enough to show that the solutions \eqref{eq:Mwithbetas} are uniformly bounded
on the set \eqref{eq:setepsilon}.

The map
\[ (\beta_1, \ldots, \beta_{N-1}) \mapsto M(z; \beta_1, \ldots, \beta_{N-1}) \]
is continuous as a map from $(\mathbb R \slash \mathbb Z)^{N-1}$ to
the $2 \times 2$-matrix valued analytic functions on $\mathbb C
\setminus [a_1, b_N]$ provided with the topology of uniform
convergence on compact subsets of $\mathbb C \setminus \{a_1, b_1,
\ldots, a_N, b_N \}$. Since $(\mathbb R \slash \mathbb Z)^{N-1}$  is
compact, we then have that the functions $M(z; \beta_1, \ldots,
\beta_{N-1})$ are also compact in this space, which in particular
implies that the functions are uniformly bounded on every set of the
form \eqref{eq:setepsilon}. So $M$ is uniformly bounded in $n$ on
\eqref{eq:setepsilon}.

Since $\det M \equiv 1$, the entries of $M^{-1}$ are, up to a sign, the same as those
of $M$, and so $M^{-1}$ is also uniformly bounded in $n$ on \eqref{eq:setepsilon}.
\end{proof}

\section{Alternative construction for the second row}
\label{section5}
The above construction of the solution $M$ of the model RH problem
is done row by row. Indeed, the case $\nu = 1$ leads to
the first row, and the case $\nu=2$ leads to the second row
of $M$.
The difference between the two cases lies in the condition \eqref{eq:res3}
that specifies which point at infinity is a pole of the meromorphic differential.
Otherwise the two cases are similar, and we treated them simultaneously.

The fact that these two cases are similar is also related to the
hyperelliptic Riemann surface, which possesses hyperelliptic
involution interchanging the sheets. In other situations related to
multi-sheeted Riemann surfaces as in \cite{DuKu}, \cite{Mo}, there
is no simple symmetry between the sheets. In addition, the point at
infinity is a branch point in \cite{DuKu}, \cite{Mo} that connects
all but one of the sheets. For the construction of a model RH
problem in such situations, it may be of interest to realize that
the construction of one row of the model RH problem can help to
construct the other rows.

We illustrate this here for the hyperelliptic case, and
so we give an alternative way to construct the second row, on
the assumption that we know the first row of $M$. We thank Alexander
Aptekarev for this remark.

Recall that the construction of the first row of $M$ is based on the
points $P_k^{(1)} \in \Gamma_k$, $k=1, \ldots, N-1$   satisfying
(\ref{eq:Psiimage}). In Lemma \ref{lem:nonspecial} we proved that
the divisor
\[ D = \sum_{k=1}^{N-1} P_k^{(1)} \]
is non-special.

\begin{lemma}\label{lem:dimLD}
The vector space of meromorphic functions on $\mathcal{R}$
(including constant functions)  with divisor greater than or equal
to $-\sum_{k=1}^{N-1} P_k^{(1)} - \infty_2$ is of dimension $2$.
\end{lemma}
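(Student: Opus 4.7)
The plan is to prove this by a direct application of the Riemann--Roch theorem, using Lemma \ref{lem:nonspecial} to control the index of speciality.

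Let $D = \sum_{k=1}^{N-1} P_k^{(1)} + \infty_2$. The space in question is precisely $L(D)$, the space of meromorphic functions $f$ on $\mathcal{R}$ with $(f) + D \geq 0$, i.e., $f$ has at most simple poles at the listed points and is holomorphic elsewhere. Note that $\deg D = N$ and the genus of $\mathcal{R}$ is $g = N-1$, so Riemann--Roch (see e.g.\ \cite{FK}) gives
\begin{equation*}
    \ell(D) = \deg D - g + 1 + i(D) = 2 + i(D),
\end{equation*}
where $i(D) = \dim H^0(K-D)$ is the dimension of the space of holomorphic differentials $\eta$ on $\mathcal R$ with $(\eta) \geq D$, i.e., with a zero at each of $P_1^{(1)}, \ldots, P_{N-1}^{(1)}$ and at $\infty_2$.

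Next I would show $i(D) = 0$ by invoking Lemma \ref{lem:nonspecial}. That lemma, applied to the points $P_1^{(1)}, \ldots, P_{N-1}^{(1)}$ (which lie in $\Gamma_1 \times \cdots \times \Gamma_{N-1}$ by the choice made just after Theorem \ref{thm:Psibijection}), asserts that the divisor $\sum_{k=1}^{N-1} P_k^{(1)}$ is non-special, meaning no nonzero holomorphic differential on $\mathcal R$ has zeros at all $N-1$ of these points. A fortiori, no nonzero holomorphic differential can have zeros at all these points \emph{and} at $\infty_2$. Hence $i(D) = 0$ and $\ell(D) = 2$.

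There is essentially no obstacle here: the main content is already packaged in Lemma \ref{lem:nonspecial}, and the rest is a one-line degree count against the genus. One could additionally exhibit two explicit linearly independent elements of $L(D)$ to make the statement concrete, for instance the constant function $1$ together with a function whose existence is guaranteed by the enlarged degree (this second function must be non-constant since the divisor $\sum P_k^{(1)}$ alone is non-special, so adjoining a simple pole at $\infty_2$ strictly enlarges $L$ by exactly one dimension); but this is not needed for the stated dimension count.
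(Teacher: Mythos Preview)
Your proof is correct and uses essentially the same ingredients as the paper's: Riemann--Roch together with Lemma~\ref{lem:nonspecial}. The only difference is in the packaging: the paper obtains the upper bound $\dim L(D)\le 2$ by observing that $L\bigl(\sum P_k^{(1)}\bigr)$ is the kernel of the residue-at-$\infty_2$ functional on $L(D)$ (so $\dim L(D)\le \dim L(\sum P_k^{(1)})+1=2$) and the lower bound from the Riemann--Roch inequality, whereas you apply the full Riemann--Roch equality once and use the differential characterization of non-speciality to kill $i(D)$ directly---a slightly more streamlined route to the same conclusion.
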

\begin{proof}
Let us denote, for a positive divisor $D'$, the space of meromorphic
functions on $\mathcal R$ whose divisor is greater than or equal to
$-D'$ by $L(D')$.

Since $D = \sum_{k=1}^{N-1} P_k^{(1)}$ is non-special, the only
functions in $L(D)$ are constant functions, so that
\[ \dim L(D) = 1. \]

Also, $L(D)$ is the kernel of the linear functional from $L(D +
\infty_2)$ to $\mathbb C$ that maps a function in $L(D+\infty_2)$ to
its residue at $\infty_2$. Thus by the dimension theorem for linear
functionals,
\[ \dim L(D + \infty_2) \leq \dim L(D) + 1 = 2. \]
On the other hand, the divisor $D +\infty_2$ is of degree $N$, so
that by the Riemann-Roch theorem, see \cite{FK},
\[ \dim L(D + \infty_2) \geq 2. \]
This proves the lemma.
\end{proof}

Suppose now that we have the first row of $M$.
We can then construct the second row of $M$ by modifying the first
row $(M_{11}, M_{12})$ by a meromorphic factor as follows.

By Lemma \ref{lem:dimLD}, the space of meromorphic functions $F$ on
$\mathcal R$ with divisor $\geq -\sum_{k=1}^{N-1} P_k^{(1)} -
\infty_2$ is two-dimensional. The following two conditions on $F$
(recall that $M_{12}$ is analytic in a neighborhood of $\infty_2$
with a simple zero at $\infty_2$)
\begin{align*}
    F(\infty_1)  = 0, \qquad \text{and} \qquad
    \lim_{z \to \infty_2} M_{12}(z) F(z)  = 1,
\end{align*}
determine $F$ uniquely.

We use $F_1$ and $F_2$ to denote the restrictions of $F$ to
the first and second sheet, respectively.
Then the two functions defined by
\begin{equation} \label{def:M21M22}
    M_{21}(z) = M_{11}(z) F_1(z), \qquad M_{22}(z) = M_{12}(z) F_2(z)
        \end{equation}
satisfy all conditions that we need for the entries in the second
row of $M$. Note also that the poles of $F_1$ and $F_2$ at
$P_1^{(1)}, \ldots, P_{N-1}^{(1)}, \infty_2$ are cancelled by the
zeros of $M_{11}$ and $M_{12}$ at these points, see also Proposition
\ref{prop:Mzeros}.

\section*{Acknowledgement}

We thank Alexander Aptekarev for helpful discussions.

\bibliographystyle{amsplain}

\end{document}